\documentclass[11pt,twoside]{amsart}
\textwidth=450pt 
\textheight=590pt 
\oddsidemargin=-6pt
\evensidemargin=-6pt

\usepackage[latin1]{inputenc}
\usepackage[OT1]{fontenc}
\usepackage{amsmath}
\usepackage{amsthm}
\usepackage{amssymb}
\usepackage[all]{xy}

\newtheorem{thm}{Theorem}[section]

\newtheorem{prop}[thm]{Proposition}

\newtheorem{lem}[thm]{Lemma}
\newtheorem{cor}[thm]{Corollary}

\numberwithin{equation}{section}

\theoremstyle{definition}

\newtheorem{remark}[thm]{Remark}

\newcommand{\qqed}{\hspace*{\fill}$\Box$}

\newcommand{\Db}{{\rm D}^{\rm b}}

\newcommand{\Pic}{{\rm Pic}}

\newcommand{\rk}{{\rm rk}}

\newcommand{\End}{{\rm End}}
\newcommand{\Hom}{{\rm Hom}}

\newcommand{\id}{{\rm id}}

\newcommand{\Ext}{{\rm Ext}}

\newcommand{\Hilb}{{\rm Hilb}}

\newcommand{\kl}{{\mathcal L}}

\newcommand{\ko}{{\mathcal O}}

\newcommand{\IC}{\mathbb{C}}

\newcommand{\IP}{\mathbb{P}}
\newcommand{\IQ}{\mathbb{Q}}

\newcommand{\IZ}{\mathbb{Z}}

\renewcommand{\to}{\xymatrix@1@=15pt{\ar[r]&}}
\renewcommand{\rightarrow}{\xymatrix@1@=15pt{\ar[r]&}}
\renewcommand{\mapsto}{\xymatrix@1@=15pt{\ar@{|->}[r]&}}
\renewcommand{\twoheadrightarrow}{\xymatrix@1@=15pt{\ar@{->>}[r]&}}
\renewcommand{\hookrightarrow}{\xymatrix@1@=15pt{\ar@{^(->}[r]&}}
\newcommand{\congpf}{\xymatrix@1@=15pt{\ar[r]^-\sim&}}
\renewcommand{\cong}{\simeq}

\begin{document}

\title[Fourier--Mukai partners of canonical covers...]{Fourier--Mukai partners of canonical covers of bielliptic and Enriques surfaces}
\author[P.\ Sosna]{Pawel Sosna}

\address{Mathematisches Institut,
Universit{\"a}t Bonn, Endenicher Allee 60, 53115 Bonn, Germany}
\email{sosna@math.uni-bonn.de}

\subjclass[2000]{18E30, 11G10, 14J28}

\keywords{Derived categories, Fourier--Mukai partners, bielliptic surfaces, Enriques surfaces}

\begin{abstract} \noindent
We prove that the canonical cover of an Enriques surface does not admit non-trivial Fourier--Mukai partners. We also show that the canonical cover of a bielliptic surface has at most one non-isomorphic Fourier--Mukai partner. The first result is then applied to birational Hilbert schemes of points and the second to birational generalised Kummer varieties. An appendix establishes that there are no exceptional or spherical objects in the derived category of a bielliptic surface.

\vspace{-2mm}\end{abstract}
\maketitle

\maketitle

\section{Introduction}

In \cite{Mukai} Mukai discovered that an abelian variety $A$ and its dual abelian variety $\widehat{A}$ are always derived equivalent (or Fourier--Mukai partners), even though in general they are not even birational. Since this observation a lot of effort has been put into the investigation of possible Fourier--Mukai partners of a given variety $X$. It turns out that, for example, the derived category determines the variety if the canonical (or anti-canonical) bundle is ample. Thus, it is natural to consider the case where the canonical bundle is trivial or torsion. Since the derived category of an elliptic curve determines the curve, the case of surfaces is the next interesting one. If the Kodaira dimension of the surface is 0, then, as is well known, there are four possibilities. The surface is either abelian, K3, Enriques or bielliptic. Bridgeland and Maciocia proved in \cite{BM} that an Enriques or a bielliptic surface does not admit non-isomorphic Fourier--Mukai partners. In the other two cases quite a lot is known, see, for example, \cite{HLOY}, \cite{Orlov} and \cite{HLOY2}. In this note we prove the following result (see Propositions \ref{K3-Enriques} and \ref{abelian-bielliptic}).

\begin{thm} If $X$ is a K3 surface which covers an Enriques surface, then any surface derived equivalent to $X$ is isomorphic to $X$. If $A$ is an abelian surface covering a bielliptic surface, then the only non-trivial Fourier--Mukai partner $A$ can have is $\widehat{A}$.
\end{thm}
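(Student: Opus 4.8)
The plan is to reduce both assertions to the lattice-theoretic classification of Fourier--Mukai partners and then to exploit the very restricted shape of the transcendental lattice forced by the covering hypothesis. For the first statement I would begin by observing that any smooth projective surface $Y$ with $\Db(Y) \cong \Db(X)$ is again a K3 surface: derived equivalence preserves the order of the canonical bundle and the Hochschild homology, so $\omega_Y \cong \ko_Y$ and $h^{1,0}(Y) = h^{1,0}(X) = 0$, and these properties characterise K3 surfaces among surfaces of Kodaira dimension $0$. By Orlov's derived Torelli theorem, $Y$ is then a Fourier--Mukai partner of $X$ if and only if there is a Hodge isometry $T(X) \cong T(Y)$ of transcendental lattices; counting the partners of $X$ therefore becomes the Hosono--Lian--Oguiso--Yau computation, a double-coset count for the isometry groups of $\NS(X)$ and $T(X)$ acting on the discriminant group $A_{T(X)}$.

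The decisive input is the covering hypothesis. The Enriques involution acts on $H^2(X,\IZ) \cong \Lambda$ with anti-invariant sublattice $\Lambda^- \cong U \oplus U(2) \oplus E_8(-2)$, and since it acts by $-1$ on the holomorphic two-form we obtain a primitive embedding $T(X) \subseteq \Lambda^-$; equivalently $\NS(X)$ contains $U(2) \oplus E_8(-2)$. I would feed this into the counting formula: because $\Lambda^-$ splits off a hyperbolic plane $U$ and has $2$-elementary discriminant $(\IZ/2)^{10}$, the results of Nikulin and Eichler bound $\ell(A_{T(X)})$ tightly enough that $T(X)$ is unique in its genus and $O(T(X))$ surjects onto $O(A_{T(X)})$, whence the double coset reduces to a point and $\mathrm{FM}(X) = 1$. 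I expect this last lattice-theoretic step to be the main obstacle, since it must be verified uniformly for every Picard rank that can occur and not only for the generic rank $10$, where $T(X) = U \oplus U(2) \oplus E_8(-2)$ visibly contains $U$ and the Eichler criterion applies directly.

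For the second statement the strategy is parallel, with Orlov's derived Torelli theorem for abelian varieties replacing its K3 analogue. Any surface derived equivalent to the abelian surface $A$ is again abelian, by the same Hochschild argument ($\omega$ trivial and $h^{1,0} = 2$), and a surface $B$ is a partner of $A$ if and only if there is a symplectic isomorphism $A \times \widehat A \cong B \times \widehat B$. The structural input is now that the canonical cover of a bielliptic surface is isogenous to a product of elliptic curves, indeed of the form $(E \times F)/\Gamma$ for a finite group $\Gamma$ acting by translations on both factors; consequently $T(A)$ has rank at most $4$ and a very small discriminant (in the generic case it is the unimodular lattice $U \oplus U$), so the associated double-coset space contains at most two elements. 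The main obstacle specific to this case is that the transcendental lattice does not separate $A$ from its dual $\widehat A$, which are always derived equivalent by Mukai's theorem; one therefore cannot read off the isomorphism class of a partner from its Hodge data, but must track the full symplectic automorphism of $A \times \widehat A$ and check that the only freedom it retains is the passage to $\widehat A$. This confines the partners of $A$ to $\{A, \widehat A\}$, giving the asserted bound and collapsing to the absence of a non-trivial partner exactly when $\Gamma$ is split, so that $\widehat A \cong A$.
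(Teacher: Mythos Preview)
Your overall strategy matches the paper's: reduce via derived Torelli to the Hosono--Lian--Oguiso--Yau counting formula and then verify that Nikulin's criterion forces the relevant double coset to collapse. The differences lie in how the non-generic cases are handled, and for the abelian part there is a genuine gap.

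For the K3 statement you correctly identify the obstacle but do not resolve it. The paper handles it by a finite case analysis: Picard rank $\geq 12$ is disposed of by Mukai's general result, and for rank $11$ the paper invokes Ohashi's explicit classification of the possible N\'eron--Severi lattices into the two series $G_M = U \oplus E_8(-2) \oplus \langle -4M\rangle$ (which contain $U$, so the extension argument applies) and $F_N = U(2) \oplus E_8(-2) \oplus \langle -2N\rangle$ (where one checks Nikulin's conditions (a) and (b) directly). There is no uniform bound that bypasses this classification; the inclusion $T(X) \subset \Lambda^-$ alone does not control the odd-$p$ part of $A_{T(X)}$ once $\rho > 10$.

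For the abelian statement the gap is more serious. First, the count in formula~(\ref{abelian-number}) enumerates equivalence classes of \emph{pairs} $\{B,\widehat B\}$, so to confine the partners to $\{A,\widehat A\}$ you need the double-coset sum to equal $1$, not ``at most two''. Second, you cannot get there by Nikulin's criterion alone. The paper runs through the Bagnera--de Franchis list: when the cover is a genuine product the N\'eron--Severi lattice contains $U$ and Shioda's Torelli theorem for abelian surfaces (rather than the symplectic-isomorphism tracking you sketch) finishes the job; when $\rho=2$ and $G = \IZ/3\IZ \times \IZ/3\IZ$ one has $\NS(A)\cong U(3)$, and here condition~(a) of Proposition~\ref{Nik} \emph{fails} at $p=3$ since $\rk U(3) = 2 < 2+2$. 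The paper handles this case by computing $O(U(3))\cong(\IZ/2\IZ)^2$ and $O(A_{U(3)})$ explicitly and checking surjectivity and genus-uniqueness by hand. The remaining cases $\NS \cong U(2)$ and $\NS \cong U(2)\oplus\langle -4N\rangle$ do follow from Nikulin as you suggest.
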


Thus the additional geometric information imposes restrictions on the number of Fourier--Mukai partners. We will use the above result to establish that birational Hilbert schemes of points on K3 surfaces as above are automatically isomorphic, see Corollary \ref{bir-Hilb}. Corollary \ref{bir-kum} establishes a similar result in the case of birational generalised Kummer varieties.

The note is organised as follows: In Section 2 we recall the necessary facts about lattices and canonical covers and describe the known formulae for the number of Fourier--Mukai partners of a K3 resp.\ an abelian surface. In Section 3 we prove the first part of the theorem and Section 4 establishes the second part. The last section is devoted to the study of the derived category of a bielliptic surface: We prove that neither spherical nor exceptional objects exist. Throughout we work over the complex numbers.

\smallskip

\noindent{\bf Acknowledgements.} I thank Daniel Huybrechts for his comments on a preliminary version of this paper and Mart\'{i} Lahoz, Emanuele Macr\`{i} and Sven Meinhardt for useful discussions. This work was partially financially supported by the SFB/TR 45 `Periods, Moduli Spaces and Arithmetic of Algebraic Varieties' of the DFG (German Research Foundation).

\section{Preliminaries}\label{Preliminaries}
A \emph{lattice} is a free abelian group $L$ of finite rank endowed with a symmetric non-degenerate $\IZ$-valued bilinear form $b$. A lattice is \emph{even} if for any $l\in L$ the integer $b(l,l)$ is even. An \emph{isometry} between two lattices is a group homomorphism preserving the bilinear forms. The \emph{dual lattice} $L^*$ is the group $\Hom(L,\IZ)$ endowed with the natural extension of the bilinear form on $L$. There is an embedding $L\, \hookrightarrow L^*$ given by $l \mapsto b(-,l)$ and $L$ is called \emph{unimodular} if the map is an isomorphism. An example of a unimodular lattice is the \emph{hyperbolic plane} $U$, which is the group $\IZ^2$ endowed with the bilinear form which on the basis $e$ and $f$ is given by $e^2=f^2=0$ and $ef=1$. Another example is the unique positive definite even unimodular lattice $E_8$, see \cite[Ch.\ I.2]{BPV}. If $L$ is a lattice and $k \in \IZ$, then $L(k)$ denotes the same abelian group with the bilinear form multiplied by $k$. Given a sublattice $M$ of a lattice $L$, its orthogonal complement $M^\bot$ is the group of elements $l \in L$ satisfying $b(m,l)=0$ for all $m\in M$. We call a sublattice $M$ of a lattice $L$ \emph{primitive} if $L/M$ is torsion-free. 

The \emph{discriminant group} of the lattice $L$ is by definition $A_L=L^*/L$. This is a finite group of order $|\det(L)|$ and $b$ induces a symmetric bilinear form $b_L\colon A_L\times A_L\rightarrow \IQ/\IZ$ and a corresponding quadratic form $q_L\colon A_L\rightarrow \IQ/2\IZ$. A lattice $L$ is \emph{$p$-elementary} if $A_L\cong (\IZ/p\IZ)^a$ for some natural number $a$. 

Given a lattice $L$, its \emph{genus} is the set $\mathcal{G}(L)$ of isometry classes of lattices $L'$ such that $(A_L,q_L)\cong (A_{L'},q_{L'})$ and the signature of $L'$ is equal to the signature of $L$.

The following result of Nikulin, see \cite[Thm.\ 14.4.2]{Nikulin}, will be used quite frequently in the sequel.

\begin{prop}[Nikulin]\label{Nik}
Let $T$ be an even indefinite nondegenerate lattice satisfying the following conditions:
\begin{itemize}
\item[(a)] $\rk(T)\geq \rk(A_{T_p})+2$ for all prime numbers $p\neq 2$.
\item[(b)] if $\rk(T)=l(A_{T_2})$, then $q_{T_2}$ contains a component $u(2)$ or $v(2)$.
\end{itemize}
Then the genus of $T$ contains only one class, and the homomorphism $O(T)\rightarrow O(A_T)$ is surjective. Here $A_{T_p}$ denotes the $p$-component of the finite abelian group $A_T$, $l$ denotes the number of generators, $u(2)$ is the discriminant group of the lattice $U(2)$ and $v(2)$ is described in \cite{Nikulin}.\qqed
\end{prop}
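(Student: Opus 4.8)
The plan is to prove both assertions through the local--global theory of integral quadratic forms, reducing everything to computations at the individual primes $p$. Recall first that two even lattices lie in the same genus precisely when they become isometric after tensoring with $\IZ_p$ for every $p$ and with $\IR$; by Nikulin's correspondence between lattices and discriminant forms this is equivalent to their sharing the same signature together with the same discriminant quadratic form $(A_T,q_T)$. Thus the genus of $T$ is already pinned down by the invariants appearing in the hypotheses, and the content of the first assertion is that under (a) and (b) this genus contains a \emph{single} isometry class.

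For this I would invoke Eichler's theorem: for an indefinite lattice of rank $\geq 3$, strong approximation for the spin group shows that each proper \emph{spinor} genus inside the genus of $T$ consists of exactly one isometry class. Consequently the number of classes in the genus equals the number of proper spinor genera, and the latter is the index of $\IQ^{*}\cdot\theta\!\left(\prod_{p} O(T_{p})\right)$ inside the idele group, where $\theta$ denotes the spinor norm. It therefore suffices to show that the local spinor norms $\theta\bigl(O(T_p)\bigr)$ are as large as possible, surjecting onto $\IQ_p^{*}/\IQ_p^{*2}$, at every prime: then there is only one spinor genus and hence only one class. For odd $p$ this is exactly where hypothesis (a) enters, since the bound $\rk(T)\geq \rk(A_{T_p})+2$ forces $T_p$ to contain a unimodular orthogonal summand of rank $\geq 2$, whose orthogonal group already realizes every spinor norm. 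At $p=2$ the analogous statement is more delicate, and condition (b) is precisely what secures it, forcing the presence of a $u(2)$ or $v(2)$ block whose local orthogonal group is again spinor-norm surjective.

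For the surjectivity of $O(T)\rightarrow O(A_T)$ I would argue in two stages. Locally one checks that the natural map $O(T_p)\rightarrow O(q_{T_p})$ from the $p$-adic isometries to the automorphisms of the local discriminant form is surjective; the rank hypothesis (a) and the structural hypothesis (b) are exactly the conditions guaranteeing this, the odd-$p$ case again following from the rank-$\geq 2$ unimodular summand and the $2$-adic case from the $u(2)/v(2)$ component. Globally, given an automorphism $\bar\varphi$ of $(A_T,q_T)$, one lifts it primewise to a collection of local isometries, assembles these into an adelic isometry inducing $\bar\varphi$, and then uses the uniqueness of the class in the genus (equivalently, strong approximation once more) to replace this adelic isometry by a genuine global element of $O(T)$ acting as $\bar\varphi$ on $A_T$.

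The main obstacle, as the hypotheses already signal, is the analysis at $p=2$: the classification of even $2$-adic lattices and their discriminant forms is substantially more involved than for odd primes, and both the spinor-norm surjectivity and the local lifting $O(T_2)\rightarrow O(q_{T_2})$ can genuinely fail without a $u(2)$ or $v(2)$ summand in the boundary case $\rk(T)=l(A_{T_2})$. Verifying that condition (b) is sharp, in the sense that it simultaneously repairs both potential failures at $2$, is where the real work lies; the remaining steps are essentially formal once the correct local statements at $2$ are in hand.
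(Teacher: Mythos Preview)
The paper does not give its own proof of this proposition: it is stated as a result of Nikulin, attributed to \cite[Thm.\ 14.4.2]{Nikulin}, and closed immediately with a \qedsymbol. There is therefore nothing in the paper to compare your argument against; the author simply quotes the statement and uses it as a black box in the rest of the note.

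That said, your sketch is a reasonable outline of how results of this type are actually established, and is broadly in the spirit of Nikulin's original approach (reduction to local discriminant forms, then a strong-approximation/spinor-genus argument to pass from local to global). One caveat: Eichler's theorem in the form you invoke requires rank $\geq 3$, so the rank-$2$ case would need to be handled separately; you should flag this. Otherwise the outline is sound, and the identification of the $p=2$ analysis as the crux is accurate.
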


We will frequently need the formulae for the number of Fourier--Mukai partners of a K3 surface or an abelian surface established in \cite{HLOY} and \cite{HLOY2} respectively. Let $X$ be a K3 or abelian surface with transcendental lattice $T_X$ (sometimes written as $T(X)$) and period $\IC\omega_X$. We define the group
\[G_{Hodge}:=O_{Hodge}(T_X,\IC\omega_X)=\left\{g \in O(T_X)\; | \; g(\IC\omega_X)=\IC\omega_X\right\}.\]
It is known that the genus of a lattice with fixed rank and discriminant is a finite set. We have a natural map $O(L)\rightarrow O(A_L)$. On the other hand, given a marking $\varphi$ for $X$, we can use it to define an embedding $G_{Hodge}\hookrightarrow O(T)$, where $T=\varphi(T_X)$. Using that the discriminant groups of a lattice and its orthogonal complement are isomorphic, we get an action of $G_{Hodge}$ on $O(A_{T^\bot})$. 

Consider a K3 surface $X$ and set $\mathcal{G}(NS(X))=\mathcal{G}(L)=\left\{L_1,\ldots,L_k\right\}$. Then the number of Fourier--Mukai partners $FM(X)$ of $X$ is given by
\begin{equation}\label{K3-number}
|FM(X)|=\sum_{i=1}^k |O(L_i)\setminus O(A_{L_i})/G_{Hodge}|.
\end{equation}
If $A$ is an abelian surface we have a surjective morphism
\[\xi\colon FM(A)\rightarrow \mathcal{P}^{eq}(T(A),U^{\oplus 3}), B \mapsto \iota_B,\]
so that $\xi(\iota_B)=\left\{B,\widehat{B}\right\}$ and where the set on the right is the set of $G_{Hodge}$-equivalence classes ($G_{Hodge}$ is defined as above) of primitive embeddings of $T(A)$ into $U^{\oplus 3}$. We furthermore have
\begin{equation}\label{abelian-number}
|\mathcal{P}^{eq}(T(A),U^{\oplus 3})|=\sum_{i=1}^k |O(L_i)\setminus O(A_{L_i})/G_{Hodge}|,
\end{equation}
where the $L_i$ are the lattices in the genus of $NS(A)$.

We also need to recall the notion of \emph{canonical cover}. Namely, let $X$ be a smooth projective variety with torsion canonical bundle $\omega_X$ whose order is $n$. The canonical cover $\widetilde{X}$ is the unique (up to isomorphism) smooth projective variety with trivial canonical bundle with an \'etale map $\pi\colon \widetilde{X} \rightarrow X$ of degree $n$ such that $\pi_*\ko_{\widetilde{X}}=\bigoplus_{i=0}^{n-1}\omega_X^i$. Furthermore, there is a free action of the cyclic group $G=\IZ/n\IZ$ on $\widetilde{X}$ such that the morphism $\pi$ is the quotient morphism. 

\section{K3 surfaces covering Enriques surfaces}

Recall that an Enriques surface is a compact complex surface $S$ of Kodaira dimension 0 with $H^1(S,\ko_S)=H^2(S,\ko_S)=0$. Any Enriques surface is projective, its canonical bundle is torsion of order 2 and the canonical cover of an Enriques surface is a K3 surface. Conversely, a quotient of a K3 surface by a fixed-point free involution is an Enriques surface. Note that we have an isomorphism $\Pic(S)\cong H^2(S,\IZ)$ obtained from the exponential sequence. Dividing out torsion we get the lattice $E_8(-1)\oplus U$. Pullback to the covering K3 surface gives the lattice $E:=E_8(-2)\oplus U(2)$, which is often referred to as the \emph{Enriques lattice}. An Enriques surface is \emph{generic} if the Picard group of the covering K3 surface is precisely $E$. 

By general results  in \cite{Mukai2} (which are based on Proposition 2.1) we know that a K3 surface of Picard rank $\geq 12$ does not have any Fourier--Mukai partners. Thus, if we consider K3 surfaces covering Enriques surfaces we only have to consider Picard ranks 10 and 11. 

\begin{prop}\label{K3-Enriques}
Let $S$ be an Enriques surface and let $X$ be the covering K3 surface. Then $\Db(X)\cong \Db(Y) \Leftrightarrow X\cong Y$.
\end{prop}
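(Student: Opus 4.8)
The plan is to use the Fourier--Mukai partner counting formula (\ref{K3-number}) and show that every term in the sum equals one, so that $|FM(X)|=1$. By the remark preceding the statement, since $X$ covers an Enriques surface we only need to handle Picard ranks $10$ and $11$. In both cases the transcendental lattice $T_X$ has signature $(2, 20-\rho)$ and is indefinite, even, and nondegenerate, so it is a candidate for Nikulin's Proposition \ref{Nik}. The key structural input is that $\Pic(X)$ contains the Enriques lattice $E=E_8(-2)\oplus U(2)$ as a sublattice: this forces $\NS(X)$ (and dually $T_X$) to be $2$-elementary, i.e.\ $A_{\NS(X)}\cong(\IZ/2\IZ)^a$. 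This is precisely the arithmetic regime in which Nikulin's criterion can be applied.

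First I would pin down the lattices explicitly. For rank $10$ the generic Enriques case gives $\NS(X)=E$, whose discriminant form is well understood ($A_E\cong(\IZ/2\IZ)^{10}$ with the form built from copies of $u(2)$), and the transcendental lattice is $T_X\cong E_8(2)\oplus U(2)\oplus U$ or an analogous $2$-elementary lattice of signature $(2,10)$. For rank $11$ one has $\NS(X)=E\oplus\langle -2d\rangle$ or a small overlattice thereof, and $T_X$ correspondingly of signature $(2,9)$. In each case I would verify Nikulin's hypotheses (a) and (b) for $T=T_X$: condition (a), that $\rk(T)\geq \rk(A_{T_p})+2$ for odd primes $p$, holds trivially because $T_X$ is $2$-elementary so $A_{T_p}=0$ for $p\neq 2$; condition (b), concerning the $u(2)$ or $v(2)$ component when $\rk(T)=l(A_{T_2})$, is arranged by the presence of a $U(2)$ summand contributing exactly a $u(2)$ component to the discriminant form.

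Granting Nikulin's conclusion, the genus $\mathcal{G}(\NS(X))=\{L_1,\dots,L_k\}$ consists of a single class, so $k=1$ and $L_1\cong\NS(X)$, and moreover $O(L_1)\to O(A_{L_1})$ is surjective. Surjectivity makes the left quotient $O(L_1)\backslash O(A_{L_1})$ trivial (a single element), whence the double coset set $O(L_1)\backslash O(A_{L_1})/G_{Hodge}$ has cardinality one regardless of the $G_{Hodge}$-action. Plugging into (\ref{K3-number}) yields $|FM(X)|=1$, and since over $\IC$ any surface derived equivalent to a K3 surface is again a K3 surface (by Orlov's description of derived equivalences of surfaces), this means $Y\cong X$; the reverse implication is trivial.

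The main obstacle I anticipate is the case of Picard rank $11$, where $\NS(X)$ need not be the orthogonal direct sum $E\oplus\langle -2d\rangle$ but may be a nontrivial overlattice, and one must confirm that it remains $2$-elementary and that the $u(2)$-component survives so that Nikulin's hypothesis (b) still applies. A careful classification of the $2$-elementary even lattices of signature $(1,10)$ that can arise as the Néron--Severi lattice of a K3 covering an Enriques surface is therefore needed; the rank-$10$ generic case is comparatively routine. Once this lattice-theoretic bookkeeping is complete, the derived-categorical conclusion follows formally from the counting formula.
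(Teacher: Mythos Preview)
Your overall strategy---apply Nikulin's criterion and feed it into the counting formula---is essentially what the paper does in the hardest subcase, but there is a genuine error in your central structural claim. Containing the Enriques lattice $E$ does \emph{not} force $\NS(X)$ (equivalently $T_X$) to be $2$-elementary. The paper invokes Ohashi's classification of the rank-$11$ N\'eron--Severi lattices that actually occur,
\[
F_N=U(2)\oplus E_8(-2)\oplus\langle -2N\rangle\quad(N\geq 2),\qquad G_M=U\oplus E_8(-2)\oplus\langle -4M\rangle\quad(M\geq 1),
\]
and neither family is $2$-elementary in general: $F_3$ carries a $\IZ/6\IZ$ summand in its discriminant group, and $G_1$ carries a $\IZ/4\IZ$ summand. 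Your plan to ``confirm that it remains $2$-elementary'' would therefore fail outright, and your justification of Nikulin's hypothesis~(a)---that $A_{T_p}=0$ for odd $p$---is false as stated.

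The repair is easy once one has the explicit lattices. For odd $p$ the $p$-part of the discriminant comes only from the rank-one summand and is thus cyclic of length at most $1$, so condition~(a) holds since $11\geq 1+2$; condition~(b) is supplied by the $u(2)$ coming from the $U(2)$ summand in $F_N$. For $G_M$ the paper does not use Proposition~\ref{Nik} at all but instead exploits the $U$ summand to extend isometries directly (Nikulin's Thm.~14.4.4); for rank $10$ it likewise uses Namikawa's extension theorem rather than the counting formula. One smaller slip: you verify Nikulin's hypotheses for $T=T_X$ but then assert the conclusions (unique genus, surjectivity of $O(L)\to O(A_L)$) for $L=\NS(X)$; these are different lattices, and the counting formula~(\ref{K3-number}) needs the latter. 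The paper applies the proposition directly to $\NS(X)$.
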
  

\begin{proof}
By the Derived Torelli theorem for K3 surfaces the existence of a derived equivalence between $X$ and $Y$ is equivalent to the existence of a Hodge isometry between the transcendental lattices $T(X)\cong T(Y)$. Let us begin with generic Enriques surfaces. It is a fact that the transcendental lattice of $E$ in the K3 lattice $\Lambda=E_8(-1)^{\oplus 2}\oplus U^{\oplus 3}$ is isometric to $E\oplus U$. Now, \cite[Thm.\ 1.4]{Namikawa} gives that any isometry of $E\oplus U$ extends to an isometry of $\Lambda$. By the Torelli theorem for K3 surfaces we conclude that $X\cong Y$.

Ohashi \cite[Prop.\ 3.5]{Ohashi} classified the Neron--Severi lattices of K3 surfaces of Picard rank 11 covering an Enriques surface. We have the following two series
\begin{align*}
&F_N:=U(2)\oplus E_8(-2)\oplus \left\langle -2N\right\rangle, \;\; N\geq 2,\\
&G_M:=U\oplus E_8(-2)\oplus \left\langle -4M\right\rangle, \;\; M\geq 1.
\end{align*}
In the second case, the Neron--Severi group contains the hyperbolic plane and hence any isometry of the transcendental lattice extends to the K3 lattice by \cite[Thm.\ 14.4.4]{Nikulin}. Therefore, the K3 surfaces belonging to the second case do not have any FM-partners. Consider the lattices $F_N$. It is clear that for any $p\neq 2$ the rank of the $p$-component of the discriminant group is $1$. Hence, condition (a) in Proposition \ref{Nik} is satisfied. One also easily sees that condition (b) is satisfied as well. Equation \ref{K3-number} gives the result. 
\end{proof}

\begin{cor}\label{bir-Hilb}
Let $X$ and $Y$ be two K3 surfaces covering Enriques surfaces and assume that the Hilbert schemes of $n$ points $\Hilb^n(X)$ and $\Hilb^n(Y)$ are birational. Then there exists an isomorphism $\Hilb^n(X)\cong\Hilb^n(Y)$.
\end{cor}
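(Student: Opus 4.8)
The plan is to deduce Corollary \ref{bir-Hilb} from Proposition \ref{K3-Enriques} by exploiting the fact that a birational equivalence between holomorphic symplectic manifolds forces a derived equivalence of the underlying surfaces. Concretely, I would argue as follows. The Hilbert schemes $\Hilb^n(X)$ and $\Hilb^n(Y)$ are irreducible holomorphic symplectic manifolds, and a classical result of Huybrechts states that birational hyperkähler manifolds are deformation equivalent and, in particular, carry a Hodge isometry between their second integral cohomology groups (equipped with the Beauville--Bogomolov form).

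First I would recall the structure of the Beauville--Bogomolov lattice of $\Hilb^n(X)$: there is an isometry $H^2(\Hilb^n(X),\IZ)\cong H^2(X,\IZ)\oplus\IZ\delta$, where $2\delta$ is the class of the diagonal divisor and $\delta^2=-2(n-1)$. The transcendental part $T(\Hilb^n(X))$ is precisely $T(X)$ (the extra class $\delta$ is algebraic), so the birational Hodge isometry restricts to a Hodge isometry $T(X)\cong T(Y)$ of the surface transcendental lattices. By the Derived Torelli theorem for K3 surfaces this yields a derived equivalence $\Db(X)\cong\Db(Y)$, and now Proposition \ref{K3-Enriques} applies directly to give $X\cong Y$.

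Once $X\cong Y$ is established, the conclusion $\Hilb^n(X)\cong\Hilb^n(Y)$ is immediate, since the Hilbert scheme is a functorial construction: an isomorphism of surfaces induces an isomorphism of the corresponding Hilbert schemes of points. I would phrase the argument so that the key input is the passage from a birational map of Hilbert schemes to a Hodge isometry of transcendental lattices of the surfaces.

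The main obstacle I anticipate is justifying the step that the birational Hodge isometry preserves the transcendental lattices, i.e.\ that it carries $T(X)$ to $T(Y)$. A general Hodge isometry of the Beauville--Bogomolov lattices need not send the transcendental part of one surface to that of the other a priori; one must use that a birational map between hyperkähler manifolds is an isomorphism in codimension one and hence induces an isometry compatible with the Hodge structures, so that it necessarily identifies the transcendental (minimal Hodge substructure containing the period) parts. This functoriality, together with the identification $T(\Hilb^n)\cong T(\text{surface})$, is where the care is needed; the rest of the argument is a direct combination of the Derived Torelli theorem and Proposition \ref{K3-Enriques}.
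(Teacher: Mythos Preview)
Your argument is correct and is essentially the paper's own: birational Hilbert schemes $\Rightarrow$ $\Db(X)\cong\Db(Y)$ $\Rightarrow$ (Proposition~\ref{K3-Enriques}) $X\cong Y$ $\Rightarrow$ $\Hilb^n(X)\cong\Hilb^n(Y)$; the paper simply cites \cite[Prop.~10]{Ploog} for the first implication rather than unpacking it via the Beauville--Bogomolov form and Derived Torelli as you do. Your anticipated obstacle is not a genuine one, since the transcendental lattice is the minimal primitive sub-Hodge structure containing $H^{2,0}$ and is therefore automatically carried to the transcendental lattice by any Hodge isometry.
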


\begin{proof}
The assumption implies that $\Db(X)\cong \Db(Y)$, see \cite[Prop.\ 10]{Ploog}. Now apply the above proposition. Of course, the same argument works, for example, for Hilbert schemes of points on K3 surfaces of Picard rank at least 12.
\end{proof}

We can apply this result to \emph{Enriques manifolds} which were introduced in \cite{OS}. An Enriques manifold is a connected complex manifold which is not simply connected and whose universal cover is a hyperk\"ahler manifold. A particular example is obtained as follows. Let $S$ be an Enriques surface and let $X$ be the K3 surface covering it. For any odd $n\geq 1$ the induced action of the group $G=\IZ/2\IZ$ (corresponding to the involution on $X$) on $\Hilb^n(X)$ is free and the quotient is an Enriques manifold $R$ with $\pi_1(R)=\IZ/2\IZ$.

\begin{cor}
Let $R=\Hilb^n(X)/G$ and $R'=\Hilb^n(X')/G$ be birational Enriques manifolds, where $X$ and $X'$ cover generic Enriques surfaces. Then there exists an isomorphism $R\cong R'$.
\end{cor}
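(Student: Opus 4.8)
The plan is to reduce the statement to Corollary~\ref{bir-Hilb} by passing to the universal covers and then descending the resulting isomorphism to the quotients. Recall that the universal covering maps of $R$ and $R'$ are the quotient maps $p$ and $p'$ from $\Hilb^n(X)$ and $\Hilb^n(X')$ by the deck group $G=\IZ/2\IZ$. Since the canonical bundles $K_R$ and $K_{R'}$ pull back to the trivial canonical bundle of the respective hyperk\"ahler cover, they are $2$-torsion and in particular nef; thus $R$ and $R'$ are minimal models, and any birational map between them is an isomorphism in codimension one. Hence there are open subsets $U\subseteq R$ and $U'\subseteq R'$ whose complements have codimension $\geq 2$, together with an isomorphism $U\cong U'$. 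Removing a closed subset of codimension $\geq 2$ from a smooth variety affects neither its fundamental group nor the simple connectedness of a covering space, so $p^{-1}(U)$ and $(p')^{-1}(U')$ are the universal covers of $U$ and $U'$ and are dense open in $\Hilb^n(X)$ and $\Hilb^n(X')$ respectively. By functoriality of the universal cover, the isomorphism $U\cong U'$ lifts to an isomorphism between these dense open subsets, so $\Hilb^n(X)$ and $\Hilb^n(X')$ are birational.

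At this point Corollary~\ref{bir-Hilb} applies and shows that $\Hilb^n(X)$ and $\Hilb^n(X')$ are isomorphic. In fact its proof gives more: birationality of the Hilbert schemes forces $\Db(X)\cong\Db(X')$ by \cite[Prop.\ 10]{Ploog}, and Proposition~\ref{K3-Enriques} then yields an isomorphism $\varphi\colon X\cong X'$ of the underlying K3 surfaces. The remaining, and essential, point is that $\varphi$ can be arranged to be compatible with the Enriques involutions $\iota_X$ and $\iota_{X'}$ that define the two $G$-actions.

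For this I would use that a K3 surface covering a \emph{generic} Enriques surface admits a unique Enriques involution. Indeed, for any Enriques involution $\iota$ the invariant lattice $H^2(X,\IZ)^{\iota^*}$ is primitive and isometric to the Enriques lattice $E=E_8(-2)\oplus U(2)$; since $\iota^*\omega_X=-\omega_X$, every invariant class is orthogonal to the period and hence algebraic, so $H^2(X,\IZ)^{\iota^*}\subseteq\Pic(X)$. For generic $X$ one has $\Pic(X)=E$, of the same rank and discriminant, which forces $H^2(X,\IZ)^{\iota^*}=\Pic(X)$; thus $\iota^*$ acts as $+1$ on $\Pic(X)$ and as $-1$ on $T(X)$. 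This determines $\iota^*$ on the finite-index sublattice $\Pic(X)\oplus T(X)$, hence on all of $H^2(X,\IZ)$, independently of $\iota$, and as the representation of $\Aut(X)$ on $H^2(X,\IZ)$ is faithful the involution $\iota$ is unique. Applying this to $X'$, the involution $\varphi\iota_X\varphi^{-1}$ is an Enriques involution on the generic K3 surface $X'$, so it must equal $\iota_{X'}$; that is, $\varphi$ is $G$-equivariant. Therefore $\varphi^{[n]}\colon\Hilb^n(X)\cong\Hilb^n(X')$ is $G$-equivariant and descends to the desired isomorphism $R\cong R'$. I expect the whole difficulty to lie in this last equivariance, namely upgrading a bare isomorphism $X\cong X'$ to one intertwining the two involutions. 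It is precisely the uniqueness of the Enriques involution in the generic case that makes the descent work, which is why the genericity assumption is indispensable here.
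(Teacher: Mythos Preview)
Your proof is correct and follows essentially the same route as the paper: lift the birational map to the universal covers, apply Corollary~\ref{bir-Hilb} to obtain $X\cong X'$, and then use that a K3 surface covering a \emph{generic} Enriques surface admits a unique fixed-point free involution to make the isomorphism $G$-equivariant. The paper states each of these three steps in a single clause, whereas you have supplied the details (the codimension-one argument for the lift and the lattice-theoretic justification of uniqueness); nothing is missing or different in substance.
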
  

\begin{proof}
The universal covers are birational as well and the claim follows at once from the previous corollary and the observation that under our assumption the surface $X$ ($\cong X'$) admits only one fixed-point free involution.
\end{proof}
%\begin{prop}\label{K3-Enriques}
%Let $X$ be a K3 surface covering an Enriques surface and assume that the Neron--Severi group of $X$ is not of the form $F_N:=U(2)\oplus E_8(-2)\oplus %\left\langle -2N\right\rangle$ for $N\geq 2$. Then $X$ does not have any non-trivial FM-partners.\qqed
%\end{prop}

The situation changes if one also considers twisted FM-partners of a given K3 surface $X$. Recall that given a K3 surface $X'$ and a class $\alpha$ in the Brauer group of $X'$, one can consider the abelian category of $\alpha$-twisted sheaves on $X'$ and its bounded derived category $\Db(X',\alpha)$. A \emph{twisted} FM-partner of $X$ is a twisted K3 surface $(X',\alpha)$ such that there is an equivalence $\Db(X',\alpha)\cong \Db(X)$ (see, for example, \cite{HS} for more information). We have an explicit formula in \cite{Ma} which allows us to compute the number of twisted FM-partners $\text{FM}^d(X)$ of $X$ for any given order $d$ of $\alpha$. If the Neron--Severi lattice is 2-elementary, as is the case for the Enriques lattice, then \cite[Cor.\ 4.5]{Ma} states that there are no twisted FM-partners for $d\neq 1,2$. Applying this to a K3 surface $X$ with $\Pic(X)=E$, we have
\[\text{FM}^2(X)= |O_{Hodge}(T(X)) \setminus I^2(A_{T(X)})|,\]
where $I^2(A_{T(X)})$ is the set of elements in $A_{T(X)}$ of order $2$. We know that $O_{Hodge}(T(X))$ is a cyclic group whose Euler function value divides 12, the rank of $T(X)$. Thus, $|O_{Hodge}(T(X))|\leq 42$. On the other hand, it is easily checked that $I^2(A_{T(X)})$ has more than 42 elements. Thus, the cover $X$ of a generic Enriques surface has a twisted Fourier--Mukai partner. It is to be expected that the same holds for higher Picard numbers. 

If one twists $X$ as well, then for any natural number $N$ there exist $N$ non-isomorphic algebraic K3 surfaces $X_1,\ldots, X_N$ of Picard rank 20, which can be assumed to be Kummer surfaces, and Brauer classes $\alpha_1,\ldots,\alpha_N$ on these surfaces such that the twisted derived categories $\Db(X_i,\alpha_i)$ are all derived equivalent, see \cite[Prop.\ 8.1]{HS}. Since any Kummer surface covers an Enriques surface by \cite{Keum}, we see that allowing twisting creates arbitrarily many twisted FM-partners. 

\section{Abelian surfaces covering bielliptic surfaces}

Recall that a bielliptic surface is a complex projective surface $S$ of Kodaira dimension 0 with $H^1(S,\ko_S)=\IC$ and $H^2(S,\ko_S)=0$. It turns out that any such surface is a quotient of a product of two elliptic curves $E\times F$ by the action of a finite group $G$. Note that we allow $E$ and $F$ to be isomorphic. The group $G$ acts on $E$ by translations and on $F$ in such a manner that $F/G\cong \IP^1$, so, in particular, it does not act by translations only. The canonical bundle of $S$ is torsion of order $2,3,4$ or $6$. In fact, there are only the following possibilities for $G$ and some of them include restrictions on $F$ (see \cite[Ch.\ V.5]{BPV}).

\begin{itemize}
\item[(1)] The group $G$ is cyclic of order $n=2,3,4$ or $6$. The order of $G$ is equal to the order of the canonical bundle of $S$ and the canonical cover $\widetilde{S}$ is isomorphic to $E\times F$, where $F$ has complex multiplication for $n=3,4$ and $6$. 
\item[(2)] The group $G$ is $\IZ/3\IZ \times \IZ/3\IZ$. The curve $F$ has complex multiplication, the order of the canonical bundle is $3$, so $\widetilde{S}\cong E\times F/(\IZ/3\IZ)$.
\item[(3)] The group $G$ is $\IZ/2\IZ \times \IZ/2\IZ$. Then $\widetilde{S}=E\times F/(\IZ/2\IZ)$.
\item[(4)] The group $G$ is $\IZ/4\IZ \times \IZ/2\IZ$. The curve $F$ has complex multiplication and $\widetilde{S}=E\times F/(\IZ/2\IZ)$.
\end{itemize}

Note that in the cases (2) and (4) the Picard rank of $\widetilde{S}$ is either 2 or 4, depending on whether $E$ is isogenous to $F$ or not. Also recall that if an abelian surface has Picard rank 4 and is isogenous to a product of elliptic curves, then it is isomorphic to a product, see \cite{SM}.

\begin{prop}
Let $A$ be an abelian surface which is isomorphic to a product of elliptic curves. Then any Fourier--Mukai partner of $A$ is isomorphic to $A$. In particular, the canonical cover of a bielliptic surface does not have any FM-partners in case (1). This also holds in cases (2), (3) and (4) if $\rho(\widetilde{S})=4$.
\end{prop}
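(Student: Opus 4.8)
The plan is to use the Fourier--Mukai partner formula for abelian surfaces, namely equation \eqref{abelian-number}, together with the duality relation $\xi(\iota_B)=\{B,\widehat{B}\}$ supplied by the surjection $\xi\colon FM(A)\rightarrow \mathcal{P}^{eq}(T(A),U^{\oplus 3})$. The key point is that if $A=E\times F$ is a product of elliptic curves, then $A$ is self-dual: we have $\widehat{E\times F}\cong \widehat{E}\times \widehat{F}\cong E\times F$ since every elliptic curve is canonically isomorphic to its dual via the principal polarisation given by a point. Hence $B$ and $\widehat{B}$ lie in the same isomorphism class whenever $B\cong A$, and more importantly the only way to produce a \emph{distinct} partner in the bielliptic setting is ruled out once we show the right-hand set is a single point.

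First I would reduce the problem to a purely lattice-theoretic count. Since $A$ is a product of elliptic curves, its transcendental lattice $T(A)$ has small rank: generically $\rho(A)=2$ gives $\rk T(A)=4$, and when $A$ has larger Picard rank the transcendental lattice is correspondingly smaller (rank $3$ or $2$). The strategy is to show, in each relevant case, that $|\mathcal{P}^{eq}(T(A),U^{\oplus 3})|=1$, so that the only partner is the one corresponding to $A$ itself (and its dual, which coincides with $A$). Concretely I would compute the genus of $NS(A)$ and apply equation \eqref{abelian-number}: the sum on the right must collapse to a single term equal to $1$. For products of elliptic curves the Neron--Severi lattice contains a hyperbolic plane $U$ (coming from the two fibre classes $E\times\{pt\}$ and $\{pt\}\times F$), which is the decisive structural feature.

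The main technical step is to invoke Nikulin's criterion, Proposition \ref{Nik}, or more directly the extension result for lattices containing $U$. Because $NS(A)$ contains a copy of the hyperbolic plane $U$, the orthogonal complement $T(A)$ also sits inside $U^{\oplus 3}$ with a complementary $U$ summand available, and any isometry of $T(A)$ (in particular any isometry of its discriminant form) extends to an isometry of $U^{\oplus 3}$. This forces the double coset count $|O(L_i)\setminus O(A_{L_i})/G_{Hodge}|$ to equal $1$ for the unique lattice in the genus, exactly as in the second case ($G_M$) of the proof of Proposition \ref{K3-Enriques}, where the presence of $U$ in the Neron--Severi group trivialised the partner count via \cite[Thm.\ 14.4.4]{Nikulin}. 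I expect the hard part to be bookkeeping rather than conceptual: one must verify the hypotheses of the extension theorem uniformly across the cases $\rho(A)=2,3,4$, and confirm that the genus of $NS(A)$ contains only a single class so that the sum in \eqref{abelian-number} is genuinely a single summand.

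Finally I would translate the lattice statement back to geometry. Once $|\mathcal{P}^{eq}(T(A),U^{\oplus 3})|=1$ is established, surjectivity of $\xi$ together with $\xi(\iota_B)=\{B,\widehat{B}\}$ shows that every Fourier--Mukai partner $B$ of $A$ satisfies $B\in\{A,\widehat{A}\}$; self-duality of $A$ then gives $B\cong A$. For the application to bielliptic surfaces, the cases (1) and (the $\rho(\widetilde{S})=4$ subcases of) (2), (3), (4) are precisely those in which the canonical cover $\widetilde{S}$ is an abelian surface isomorphic to a product of elliptic curves --- by the opening remarks of this section, case (1) has $\widetilde{S}\cong E\times F$ outright, and by \cite{SM} a Picard rank $4$ abelian surface isogenous to a product of elliptic curves is already isomorphic to such a product. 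Applying the first assertion of the proposition to $\widetilde{S}$ in each of these cases yields the claim.
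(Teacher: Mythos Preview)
Your proposal is correct and rests on the same key ingredients as the paper: the presence of a hyperbolic plane $U$ in $NS(A)$, Nikulin's extension theorem \cite[Thm.\ 14.4.4]{Nikulin}, and the self-duality of a product of elliptic curves. The paper's argument, however, is more direct than the route you outline. Rather than passing through the counting formula \eqref{abelian-number} and then having to verify that the genus of $NS(A)$ is a singleton and that the double coset collapses, the paper simply observes that a derived equivalence yields (by Orlov \cite{Orlov1}) a Hodge isometry $T(A)\cong T(B)$, extends it via \cite[Thm.\ 14.4.4]{Nikulin} to a Hodge isometry $H^2(A,\IZ)\cong H^2(B,\IZ)$, and then invokes Shioda's Torelli theorem \cite[Thm.\ 1]{Shioda} for abelian surfaces to conclude $B\cong A$ or $B\cong\widehat{A}$. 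This sidesteps entirely the bookkeeping you anticipated (the case split over $\rho(A)=2,3,4$ and the genus uniqueness check). Your approach via \eqref{abelian-number} works and is in the same spirit, but the direct Orlov--Nikulin--Shioda chain is shorter and makes the Torelli input explicit rather than hiding it inside the counting machinery.
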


\begin{proof}
Let $B$ be a FM-partner of $A$. By Orlov's results in \cite{Orlov1} there exists a Hodge isometry $T(A)\cong T(B)$ and, since by assumption the Neron--Severi group of $A$ contains a hyperbolic plane, we can again use \cite[Thm.\ 14.4.4]{Nikulin} to conclude that this isometry extends to a Hodge isometry $H^2(A,\IZ) \cong H^2(B,\IZ)$. By \cite[Thm.\ 1]{Shioda} this shows that $A\cong B$ (since $A$ is self-dual being a product of elliptic curves). 
\end{proof}

\begin{remark}
Two abelian varieties $A$ and $B$ are derived equivalent if and only if there exists a symplectic isomorphism $A\times \widehat{A}\cong B\times\widehat{B}$, see \cite{Orlov}. Thus, if in our situation $A=E\times F$ and $B=E'\times F'$ we have $E\times F\times E\times F\cong E'\times F'\times E'\times F'$. This is not sufficient to conclude that $A\cong B$: In \cite{Shioda2} Shioda gives a counterexample to such a statement even in smaller dimensions. Namely, there exist elliptic curves $C$, $C'$ and $C''$ such that $C\times C'\cong C\times C''$ but nevertheless $C' \ncong C''$. 
\end{remark}

The case (1) being dealt with, we now turn to case (2). We only have to consider the case where $E$ is not isogenous to $F$, so $\rho(\widetilde{S})=2$. The Neron--Severi group of $E\times F$ is generated by $E\times \left\{0\right\}$ and $\left\{0\right\} \times F$ with the two generators spanning the hyperbolic plane $U$. It is easy to see that $NS(E\times F/(\IZ/3\IZ))=U(3)$. We need the following

\begin{lem}
Let $L$ be the lattice $U(3)$. Then the canonical morphism $O(L)\rightarrow O(A_L)$ is surjective.
\end{lem}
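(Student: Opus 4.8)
The plan is to compute the two groups $O(L)$ and $O(A_L)$ explicitly and then check by hand that the reduction map is onto. Here the rank is too small for Proposition \ref{Nik} to apply: condition (a) already fails at $p=3$, since $\rk(U(3))=2$ while $\rk(A_{T_3})+2=4$. So a direct argument is unavoidable.

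First I would make the discriminant group explicit. In a basis $e,f$ with $e^2=f^2=0$ and $ef=3$ the Gram matrix of $L=U(3)$ is $\left(\begin{smallmatrix}0&3\\3&0\end{smallmatrix}\right)$, so the dual lattice $L^*$ is generated by $e/3$ and $f/3$, and $A_L=L^*/L\cong(\IZ/3\IZ)^2$. Writing $\bar e=e/3$, $\bar f=f/3$, the induced finite form satisfies $b_L(\bar e,\bar f)=1/3$ and $b_L(\bar e,\bar e)=b_L(\bar f,\bar f)=0$; that is, $A_L$ carries the split discriminant form of a hyperbolic plane over $\IF_3$.

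Next I would compute the two orthogonal groups. Since rescaling the bilinear form does not change isometries, $O(U(3))=O(U)$; an isometry of $U$ must permute the primitive isotropic vectors $\{\pm e,\pm f\}$, and tracking the cases gives exactly $O(U)=\{\pm\id,\pm\sigma\}$, where $\sigma$ interchanges $e$ and $f$, a group of order $4$. On the other side, because $3$ is odd the orthogonal group of the finite quadratic form coincides with that of the bilinear form, and the orthogonal group of a two-dimensional split form over $\IF_3$ has order $2(3-1)=4$; a short check identifies it with the reductions of $\{\pm\id,\pm\sigma\}$.

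Finally, the map $O(L)\rightarrow O(A_L)$ is simply reduction modulo $3$ of the action on $\bar e,\bar f$. It sends $\id,-\id,\sigma,-\sigma$ to their reductions, which are pairwise distinct because $-1\not\equiv 1\pmod 3$; hence this homomorphism is injective between two groups of order $4$, and therefore a bijection, in particular surjective. The only step requiring real care is the computation of $O(A_L)$: one must be sure that no further automorphisms of the finite form exist beyond the four exhibited, which is precisely the content of the order formula for the split orthogonal group in rank two.
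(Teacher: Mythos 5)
Your proof is correct and follows essentially the same route as the paper: both identify $O(U(3))=\{\pm\id,\pm\sigma\}$ and compute the discriminant form on $A_L\cong(\IZ/3\IZ)^2$ explicitly, the only difference being that you pin down $|O(A_L)|=4$ via the order formula for the split rank-two orthogonal group and conclude by injectivity, where the paper leaves the final check as "an easy computation."
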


\begin{proof}
It is easy to see that $O(L)$ is isomorphic to $\IZ/2\IZ\oplus \IZ/2\IZ$, the isometries being the identity $\id$, $-\id$, the map $\iota$ which interchanges the two generators of the hyperbolic plane $e$ and $f$ and the composition $-\id\circ \iota$. The group $A_L$ is $\IZ/3\IZ\oplus \IZ/3\IZ$. The bilinear form $b_L$ can be described as follows: The elements of the form $(x,0)$ resp.\ $(0,x)$ are isotropic, $b_L(x,x)=\frac{1}{3}$ for all $x$ and $b_L(x,y)=\frac{2}{3}$ for $x\neq y$, $x\neq 0, y\neq 0$. The surjectivity of the canonical map follows by an easy computation.
\end{proof}

\begin{lem}
The lattice $U(3)$ is the only one in its genus.
\end{lem}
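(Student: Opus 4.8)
The plan is to determine all even lattices lying in the genus of $L = U(3)$ directly and to show that each is isometric to $U(3)$. By definition, any $L'$ in the genus has signature $(1,1)$ and the same discriminant form as $U(3)$; in particular $A_{L'}\cong A_L\cong \IZ/3\IZ\oplus\IZ/3\IZ$, the group computed in the previous lemma. Since $L'$ is even of rank $2$, I would fix a basis and write its Gram matrix as $\begin{pmatrix} 2a & b \\ b & 2c\end{pmatrix}$ with $a,b,c\in\IZ$, so that the indefiniteness forces $\det L' = 4ac-b^2 = \det U(3) = -9$.

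The key observation is that the constraint $A_{L'}\cong(\IZ/3\IZ)^2$ forces $3$ to divide every entry of the Gram matrix. Indeed, writing $M$ for the Gram matrix, the group $A_{L'}=\IZ^2/M\IZ^2$ has Smith normal form $\mathrm{diag}(3,3)$ exactly when $\gcd(2a,b,2c)=3$, which (as $\gcd(3,2)=1$) gives $3\mid a,b,c$. Setting $a=3a'$, $b=3b'$, $c=3c'$, the lattice $L'$ is then isometric to $M'(3)$, where $M'$ is the even lattice with Gram matrix $\begin{pmatrix} 2a' & b' \\ b' & 2c'\end{pmatrix}$; its determinant is $4a'c'-b'^2=(4ac-b^2)/9=-1$, so $M'$ is an \emph{even unimodular} lattice of signature $(1,1)$.

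The proof then concludes by invoking the classification of even unimodular indefinite lattices: in signature $(1,1)$ the only such lattice is the hyperbolic plane $U$. Hence $M'\cong U$ and $L'\cong U(3)$, as desired.

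I do not expect a serious obstacle here. Note that Nikulin's uniqueness criterion (Proposition \ref{Nik}) is not directly applicable, since condition (a) fails for $p=3$ (one has $\rk(T)=2 < \rk(A_{T_3})+2 = 4$); the argument must therefore be the hands-on one above rather than a citation. The only point needing a little care is the Smith-normal-form step that distinguishes $A_{L'}\cong(\IZ/3\IZ)^2$ from $A_{L'}\cong\IZ/9\IZ$. Equivalently, one may phrase the whole argument through binary quadratic forms, observing that $L'$ corresponds to $3$ times a form of discriminant $1$ and that every integral binary form of discriminant $1$ is equivalent to $xy$ (class number one); this again identifies $L'$ with $U(3)$.
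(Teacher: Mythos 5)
Your proof is correct, but it takes a different route from the paper. The paper disposes of the lemma by citation: it invokes the fact that an (even, indefinite) $p$-elementary lattice is determined by its invariants \cite[Thm.\ 1.1]{AST}, and offers as an alternative a check against the Conway--Sloane classification of binary indefinite lattices, comparing discriminant forms of the four lattices of determinant $-9$. Your argument is instead self-contained: from the Smith normal form condition you deduce that every entry of the Gram matrix of $L'$ is divisible by $3$, so $L'\cong M'(3)$ with $M'$ even, unimodular, indefinite of signature $(1,1)$, and the classification of even unimodular indefinite lattices forces $M'\cong U$. This is more elementary (it needs only the standard unimodular classification rather than Nikulin's or Conway--Sloane's tables), and it even proves slightly more than required, since you only use the abstract isomorphism $A_{L'}\cong(\IZ/3\IZ)^2$ together with the signature, not the full discriminant quadratic form. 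Your observation that Proposition \ref{Nik} is inapplicable here (condition (a) fails at $p=3$) is accurate and consistent with the paper's choice not to use it for this lemma. The only step worth stating with a little more care is the equivalence ``$A_{L'}\cong(\IZ/3\IZ)^2$ iff $\gcd(2a,b,2c)=3$'', which is exactly the statement that the first elementary divisor of the Gram matrix equals $3$; as you note, oddness of $3$ then gives $3\mid a,b,c$. All of this is sound.
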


\begin{proof}
A $p$-elementary lattice is determined by its discriminant, see for example \cite[Thm.\ 1.1]{AST}. Alternatively, one can use the classification of indefinite two-dimensional lattices as found for example in \cite[Ch.\ 15]{CS} and check that the discriminant forms of the other three two-dimensional lattices of determinant $-9$ are not isometric to the discriminant form of $U(3)$.
\end{proof}

\begin{cor}
Let $S$ be a bielliptic surface such that $\widetilde{S}=E\times F/(\IZ/3\IZ)$, where $E$ and $F$ are not isogenous. Then $FM(\widetilde{S})$ consists of the abelian surface $\widetilde{S}$ and its dual.
\end{cor}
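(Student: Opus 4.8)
The plan is to compute the Fourier--Mukai number of the abelian surface $\widetilde{S}=E\times F/(\IZ/3\IZ)$ directly from formula \eqref{abelian-number}, using the two lemmas that have just been established. Recall from Orlov's theory, as reflected in the surjection $\xi\colon FM(A)\rightarrow \mathcal P^{eq}(T(A),U^{\oplus 3})$, that the set $FM(\widetilde{S})$ is controlled by the equivalence classes of primitive embeddings of the transcendental lattice into $U^{\oplus 3}$, and that the fibre of $\xi$ over each class is the pair $\{B,\widehat B\}$. So the strategy is to show that the right-hand side of \eqref{abelian-number} equals $1$, which forces $\mathcal P^{eq}(T(\widetilde{S}),U^{\oplus 3})$ to be a singleton and hence $FM(\widetilde{S})=\{\widetilde{S},\widehat{\widetilde{S}}\}$.

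First I would identify the relevant lattices. Since $E$ and $F$ are not isogenous, $\rho(\widetilde{S})=2$ and $NS(\widetilde{S})=U(3)$, as recorded above. By the second lemma, the genus $\mathcal G(NS(\widetilde{S}))$ consists of the single lattice $L_1=U(3)$, so the sum in \eqref{abelian-number} collapses to the single term $|O(U(3))\setminus O(A_{U(3)})/G_{Hodge}|$. Next I would invoke the first lemma: the canonical map $O(U(3))\rightarrow O(A_{U(3)})$ is surjective, which means every element of $O(A_{U(3)})$ is realised by an isometry of $U(3)$, so the double coset space $O(L_1)\backslash O(A_{L_1})/G_{Hodge}$ already reduces to a single point regardless of what $G_{Hodge}$ is, since left multiplication by the full image of $O(L_1)=O(U(3))$ collapses $O(A_{L_1})$ to one coset. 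Hence the right-hand side of \eqref{abelian-number} is exactly $1$.

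It then remains to translate the equality $|\mathcal P^{eq}(T(\widetilde{S}),U^{\oplus 3})|=1$ into the statement about $FM(\widetilde{S})$. Because $\xi$ is surjective with fibres of the form $\{B,\widehat B\}$, a one-element target means $FM(\widetilde{S})$ is precisely $\{\widetilde{S},\widehat{\widetilde{S}}\}$: every Fourier--Mukai partner $B$ of $\widetilde{S}$ satisfies $\iota_B=\iota_{\widetilde{S}}$, so $B$ is either $\widetilde{S}$ or its dual $\widehat{\widetilde{S}}$. This gives the conclusion.

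The main obstacle, and the point I would want to double-check, is the surjectivity of $O(U(3))\rightarrow O(A_{U(3)})$ asserted in the first lemma, since that is what makes the double coset trivial irrespective of $G_{Hodge}$; the concrete description of $O(U(3))\cong\IZ/2\IZ\oplus\IZ/2\IZ$ and of the form $b_L$ on $A_L\cong(\IZ/3\IZ)^2$ given there is exactly what is needed to verify that the four isometries surject onto $O(A_L)$. Everything else is a formal application of \eqref{abelian-number} and the structure of the map $\xi$; there is no genuine difficulty beyond correctly assembling these inputs.
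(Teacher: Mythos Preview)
Your argument is correct and is exactly the paper's approach: the paper's own proof is the single sentence ``This follows immediately from the two lemmas above and the counting formula recalled in Section~\ref{Preliminaries},'' and you have simply unpacked that sentence by showing the sum in \eqref{abelian-number} collapses to one term (second lemma) whose double coset is a singleton (first lemma), then reading off $FM(\widetilde{S})=\{\widetilde{S},\widehat{\widetilde{S}}\}$ from the structure of $\xi$.
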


\begin{proof}
This follows immediately from the two lemmas above and the counting formula recalled in Section \ref{Preliminaries}.
\end{proof}

We can now deal with the cases (3) and (4).

\begin{prop}
Let $A$ be an abelian surface such that $NS(A)$ is isometric to $U(2)$. Then $A$ has precisely one non-isomorphic Fourier--Mukai partner, namely $\widehat{A}$.
\end{prop}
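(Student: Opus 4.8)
The plan is to follow the template of the $U(3)$ case treated above: combine the counting formula \eqref{abelian-number} with the surjection $\xi\colon FM(A)\rightarrow \mathcal{P}^{eq}(T(A),U^{\oplus 3})$, whose fibre over the class of $A$ is $\{A,\widehat A\}$. It therefore suffices to prove two things: that $\mathcal{P}^{eq}(T(A),U^{\oplus 3})$ consists of a single element, and that $A\not\cong\widehat A$.

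For the first point set $L:=NS(A)\cong U(2)$. Its Gram matrix is $\left(\begin{smallmatrix}0&2\\2&0\end{smallmatrix}\right)$, so $\det L=-4$, the dual lattice equals $\tfrac12 L$, and $A_L=L^*/L\cong(\IZ/2\IZ)^2$; thus $L$ is $2$-elementary and its discriminant form is, essentially by definition, the form $u(2)$ (two of the three nonzero classes are isotropic, the third has $q=1$). The decisive point --- and what makes this case cleaner than the $U(3)$ case --- is that Nikulin's Proposition \ref{Nik} now applies directly to $T=U(2)$: the lattice is even, indefinite and nondegenerate; for every odd prime $p$ the group $A_{T_p}$ is trivial, so $\rk(T)=2\geq\rk(A_{T_p})+2$ and (a) holds; and since $l(A_{T_2})=2=\rk(T)$ while $q_{T_2}=u(2)$, condition (b) holds as well. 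Proposition \ref{Nik} then gives, in a single step, both that $U(2)$ is the only lattice in its genus and that $O(U(2))\rightarrow O(A_{U(2)})$ is surjective. (If one prefers to avoid Nikulin, the surjectivity is immediate by hand, $O(U(2))\cong(\IZ/2\IZ)^2$ mapping onto $O(A_{U(2)})\cong\IZ/2\IZ$ through the swap of the two generators, and uniqueness in the genus follows from \cite[Thm.\ 1.1]{AST}, exactly as in the two lemmas on $U(3)$.) Substituting $k=1$ and $L_1=U(2)$ into \eqref{abelian-number}, the surjectivity of $O(L_1)\rightarrow O(A_{L_1})$ collapses the double coset $O(L_1)\setminus O(A_{L_1})/G_{Hodge}$ to one element, so $|\mathcal{P}^{eq}(T(A),U^{\oplus 3})|=1$ and hence $FM(A)=\{A,\widehat A\}$.

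The remaining point, that $\widehat A$ is really a new partner rather than $A$ itself, is the step I expect to be the genuine obstacle, since it is no longer formal lattice theory. Here I would use that an abelian surface is isomorphic to its dual exactly when it carries a principal polarisation: such a polarisation would provide an ample class $D\in NS(A)\cong U(2)$ with $D^2=2$, but $(ae+bf)^2=4ab$ shows that $U(2)$ represents only multiples of $4$ and in particular never $2$. Consequently $A$ is not principally polarisable, $A\not\cong\widehat A$, and $A$ has precisely one non-isomorphic Fourier--Mukai partner, namely $\widehat A$.
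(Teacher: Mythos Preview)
Your argument for $FM(A)\subseteq\{A,\widehat A\}$ is essentially the paper's: both apply Proposition~\ref{Nik} to $U(2)$, noting it is even, indefinite and $2$-elementary with $q=u(2)$. You then feed the two conclusions (uniqueness in the genus, surjectivity of $O(U(2))\to O(A_{U(2)})$) into the counting formula \eqref{abelian-number}, whereas the paper uses the surjectivity to extend any Hodge isometry of $T(A)$ to $H^2(A,\IZ)$ and then invokes Shioda's Torelli theorem \cite[Thm.\ 1]{Shioda} directly. These are two packagings of the same mechanism.

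You then go further than the paper in arguing that $A\not\cong\widehat A$; the paper's own proof is in fact silent on this point and so, strictly speaking, only establishes ``at most one''. Your argument here, however, has a gap: the assertion that an abelian surface is isomorphic to its dual \emph{exactly} when it carries a principal polarisation is not automatic. A principal polarisation certainly gives $A\cong\widehat A$, but an arbitrary isomorphism $\psi\colon A\to\widehat A$ need not be symmetric (i.e.\ satisfy $\widehat\psi=\psi$), and hence need not lie in $NS(A)\subset\Hom(A,\widehat A)$. What saves you in this particular case is that $NS(A)\otimes\IQ\cong U(2)\otimes\IQ$ is isotropic, so $A$ cannot be simple with real multiplication and must be isogenous to a product $E\times F$ of non-isogenous elliptic curves; then $\End(A)\otimes\IQ\cong\IQ\times\IQ$ carries the trivial Rosati involution, so every homomorphism $A\to\widehat A$ is symmetric and $\Hom(A,\widehat A)=NS(A)$. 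At that point your computation finishes the job: any isomorphism would correspond to a class $D\in U(2)$ with $(D^2/2)^2=\deg\phi_D=1$, i.e.\ $D^2=\pm 2$, which $U(2)$ does not represent.
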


\begin{proof}
The lattice $U(2)$ is even, indefinite and 2-elementary. Therefore, by Proposition \ref{Nik} any Hodge isometry of its orthogonal complement $T(A)$ extends to $H^2(A,\IZ)$. Using \cite[Thm.\ 1]{Shioda} this implies that any FM-partner of $A$ is either $A$ or $\widehat{A}$.
\end{proof}

The proposition immediately implies the following

\begin{cor}
Let $S$ be a bielliptic surface in case (3) or (4) such that $\rho(\widetilde{S})=2$. Then $\widetilde{S}$ has only one non-trivial FM-partner.\qqed
\end{cor}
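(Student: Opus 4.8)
The plan is to reduce the statement to the preceding Proposition by identifying the Neron--Severi lattice of the canonical cover. In both cases (3) and (4) one has $\widetilde{S}=E\times F/(\IZ/2\IZ)$, where the relevant copy of $\IZ/2\IZ$ acts by a translation $t=(t_1,t_2)$ with $t_1\in E$ and $t_2\in F$ both of order $2$; in particular $\widetilde{S}$ is again an abelian surface and the quotient map $\psi\colon E\times F\rightarrow\widetilde{S}$ is an isogeny of degree $2$. The hypothesis $\rho(\widetilde{S})=2$ means that $E$ and $F$ are not isogenous, so, exactly as in case (2), $NS(E\times F)$ is the hyperbolic plane $U$ with basis $a=[\left\{0\right\}\times F]$ and $b=[E\times\left\{0\right\}]$, satisfying $a^2=b^2=0$ and $a\cdot b=1$.

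The key step is the computation $NS(\widetilde{S})\cong U(2)$, which is the exact analogue of the identity $NS(E\times F/(\IZ/3\IZ))=U(3)$ used above. I would argue through $\psi^*$, which embeds $NS(\widetilde{S})$ as a finite-index sublattice of $U$ and, by the projection formula for the degree-$2$ isogeny $\psi$, multiplies the intersection form by $2$. Representing a class $\alpha a+\beta b$ by a line bundle $L=p^*M^{\otimes\alpha}\otimes q^*N^{\otimes\beta}$, where $p,q$ are the two projections and $M,N$ are degree-one bundles on $E,F$, the class lies in $\psi^*NS(\widetilde{S})$ exactly when some representative $L$ is invariant under $t$, i.e. when $t^*L\cong L$. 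Since $t^*L\cong L$ if and only if $\alpha t_1=0$ in $E$ and $\beta t_2=0$ in $F$, and $t_1,t_2$ have order $2$, this forces $\alpha$ and $\beta$ to be even. Hence $\psi^*NS(\widetilde{S})=\left\langle 2a,2b\right\rangle$, and dividing the resulting intersection form by $2$ shows $NS(\widetilde{S})\cong U(2)$.

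With this in hand the conclusion is immediate: $\widetilde{S}$ is an abelian surface whose Neron--Severi lattice is isometric to $U(2)$, so the preceding Proposition applies verbatim and shows that $\widetilde{S}$ has precisely one non-trivial Fourier--Mukai partner, namely its dual $\widehat{\widetilde{S}}$.

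I expect the only genuine point to be the lattice computation, and specifically the need to distinguish $U(2)$ from its even unimodular overlattice $U$: were $\widetilde{S}$ a product of non-isogenous elliptic curves one would have $NS(\widetilde{S})\cong U$ and hence no non-trivial partner at all. The descent criterion above excludes this precisely because $t$ acts non-trivially on \emph{both} factors, so that only the sublattice $\left\langle 2a,2b\right\rangle$ descends; everything else is a direct appeal to the Proposition and to the counting formula recalled in Section \ref{Preliminaries}.
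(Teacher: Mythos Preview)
Your proof is correct and follows exactly the route the paper takes: one reduces to the preceding Proposition by showing $NS(\widetilde{S})\cong U(2)$, after which the claim is immediate. In fact the paper records this corollary with no argument at all (it is marked \qqed right after the Proposition), tacitly using the analogue of the ``easy to see'' computation $NS(E\times F/(\IZ/3\IZ))=U(3)$ from case~(2). Your descent argument via $\psi^*$ makes this explicit; the one point worth noting is that on an abelian variety every degree--zero line bundle is translation invariant, so the condition $t^*L\cong L$ is in fact independent of the representative $L$ of the class $\alpha a+\beta b$ --- hence your criterion really does characterise the image of $\psi^*$ on $NS$ and not merely on $\Pic$.
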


Lastly, we have to consider case (3) under the assumption that $\rho(\widetilde{S})=3$. First note, that $NS(E\times E)=U\oplus \left\langle -2\right\rangle$ for a curve $E$ without complex multiplication, but we can also have $E\times F$ with $F$ isogenous to $E$ and in this case $NS(E\times F)=U\oplus \left\langle -2N\right\rangle$ for some $N\geq 1$. Dividing out $\IZ/2\IZ$ gives a lattice of the form $L=U(2)\oplus \left\langle -4N\right\rangle$. Arguing as in the last part of the proof of Proposition \ref{K3-Enriques}, we conclude that in this case there is also at most one non-trivial FM-partner. Hence, we proved the following  

%These lattices (resp.\ those they induce after dividing out the group) are more complicated to deal with and we were not able to derive an explicit formula for the number of Fourier--Mukai partners. Note, however, that $G_{Hodge}$ has two elements if the rank of the lattice is odd and that lattices $L$ of rank 3 with a determinant whose absolute value is less than 128 are unique in their genus (see, for example, \cite[Ch.\ 15, Cor.\ 22]{CS}). Therefore, one only has to compute the number of elements in the set $O(L)\setminus O(A_{L})/G_{Hodge}$, which nevertheless seems not entirely trivial. Putting it all together we have the following

\begin{prop}\label{abelian-bielliptic}
Let $S$ be a bielliptic surface.Then the canonical cover $A$ of $S$ has at most one non-trivial FM-partner, namely $\widehat{A}$.\qqed
%which is not a quotient of a product $E\times F$ of two isogenous elliptic curves without complex multiplication by the action of $\IZ/2\IZ\times \IZ/2\IZ$. 
\end{prop}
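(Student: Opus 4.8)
The plan is to combine the various cases already treated in this section into a single statement. The proposition is really a bookkeeping exercise: every bielliptic surface falls into one of the four cases (1)--(4) enumerated above, and for each case we have either shown that the canonical cover $A$ has no nontrivial Fourier--Mukai partner at all, or that its only nontrivial partner is $\widehat{A}$. So the strategy is to verify that the list of cases is exhaustive and that each case yields the desired conclusion, invoking the preceding results rather than reproving anything.

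First I would recall that the canonical bundle of a bielliptic surface is torsion of order $2,3,4$ or $6$, and that by the classification in \cite[Ch.\ V.5]{BPV} the group $G$ and the covering abelian surface $\widetilde S=A$ are as described in (1)--(4). In case (1) the cover $A$ is the product $E\times F$, hence isomorphic to a product of elliptic curves, and the Proposition on products of elliptic curves shows that $A$ has no nontrivial Fourier--Mukai partner whatsoever; in particular the weaker assertion holds. This also settles cases (2)--(4) whenever $\rho(A)=4$, since then, by \cite{SM}, the isogenous abelian surface is again a product of elliptic curves and the same Proposition applies.

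Next I would dispose of the remaining subcases according to $\rho(A)=2$ or $3$. For case (2) with $E$ not isogenous to $F$ we have $\rho(A)=2$ and $NS(A)\cong U(3)$; the Corollary obtained from the two lemmas on $U(3)$ shows that $FM(A)=\{A,\widehat A\}$, so there is exactly one nontrivial partner, namely $\widehat A$. For cases (3) and (4) with $\rho(A)=2$ we have $NS(A)\cong U(2)$, and the Proposition on $U(2)$ gives precisely one nontrivial partner $\widehat A$. Finally, in case (3) with $\rho(A)=3$ we have $NS(A)\cong U(2)\oplus\langle -4N\rangle$, and the argument already indicated---applying Nikulin's Proposition \ref{Nik} exactly as in the last part of the proof of Proposition \ref{K3-Enriques} to extend any Hodge isometry of $T(A)$ to all of $H^2(A,\IZ)$, then invoking \cite[Thm.\ 1]{Shioda}---shows that any partner is $A$ or $\widehat A$.

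I do not expect a genuine obstacle here, since all the mathematical content has been established in the lemmas, propositions and corollaries leading up to this point. The one point requiring a little care is to confirm that the enumeration of Picard ranks is complete: in each of the cases (2), (3), (4) the cover $A$ is an abelian surface isogenous to a product of elliptic curves, so $\rho(A)\in\{2,3,4\}$, and one must check that the value $\rho(A)=3$ arises only in case (3) (where $NS(A)$ takes the stated form $U(2)\oplus\langle -4N\rangle$) while in cases (2) and (4) only $\rho(A)\in\{2,4\}$ occur, as noted above. Once this case-matching is verified, the proposition follows by assembling the conclusions, and the $\qqed$ is justified.
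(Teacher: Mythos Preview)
Your proposal is correct and follows exactly the paper's approach: the proposition is stated with a \qqed precisely because all the work has been done in the preceding lemmas, propositions and corollaries, and the statement merely collects the conclusions across cases (1)--(4) and the possible Picard ranks. Your case-by-case bookkeeping matches the paper's own organisation, including the final observation that case (3) with $\rho=3$ is handled via Proposition~\ref{Nik} as in the proof of Proposition~\ref{K3-Enriques}.
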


The above result has the following implication.

\begin{cor}\label{bir-kum}
If $A$ is an abelian surface as in the proposition and the generalised Kummer variety $K_n(A)$ ($n\geq 2$) is birational to $K_n(B)$ for some abelian surface $B$, then $B$ is either isomorphic to $A$ or $\widehat{A}$.
\end{cor}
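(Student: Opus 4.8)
The plan is to reduce the birational geometry of the generalised Kummer varieties to the derived category of the underlying abelian surfaces, and then invoke Proposition \ref{abelian-bielliptic}. First I would recall the structure of the generalised Kummer variety: for an abelian surface $A$ and $n\geq 2$, the variety $K_n(A)$ is defined as the fibre over $0$ of the summation morphism $\Hilb^{n+1}(A)\rightarrow A$, and it is an irreducible holomorphic symplectic manifold of dimension $2n$. The key input is the existence of an analogue of \cite[Prop.\ 10]{Ploog} (used in the proof of Corollary \ref{bir-Hilb}) for generalised Kummer varieties: a birational equivalence between two such varieties should force a derived equivalence of the abelian surfaces. The standard mechanism is that a birational map between irreducible symplectic manifolds is a deformation along a twistor-type family and in particular induces a Hodge isometry of the Beauville--Bogomolov lattices, which for $K_n(A)$ carries the Hodge structure of $H^2(A,\IZ)$ as a sublattice; this Hodge isometry then yields a Hodge isometry of the transcendental lattices $T(A)\cong T(B)$.

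Concretely, the first step is to note that a birational map $K_n(A)\dashrightarrow K_n(B)$ between these hyperk\"ahler manifolds is an isomorphism in codimension one, hence induces an isometry of their second integral cohomology groups compatible with the Beauville--Bogomolov forms and with the Hodge structures. The second step is to transfer this back to the surfaces: since the weight-two Hodge structure of $K_n(A)$ determines, and is determined by, that of $A$ up to the Tate twist coming from the exceptional divisor, one extracts a Hodge isometry $T(A)\cong T(B)$. The third and final step is to feed this into Orlov's criterion for abelian surfaces (as used in the proof of the earlier propositions): a Hodge isometry of transcendental lattices implies $\Db(A)\cong \Db(B)$, so $B$ is a Fourier--Mukai partner of $A$. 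By Proposition \ref{abelian-bielliptic}, the only such partners are $A$ and $\widehat{A}$, which gives the claim.

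I expect the main obstacle to be the rigorous justification of the passage from the birational equivalence to the Hodge isometry of transcendental lattices, i.e.\ establishing the derived equivalence $\Db(A)\cong\Db(B)$ from the hypothesis on $K_n(A)$ and $K_n(B)$. Unlike the Hilbert scheme case, where a clean reference (\cite{Ploog}) records exactly the implication "birational $\Rightarrow$ derived equivalent of the underlying surfaces", the generalised Kummer case requires being careful about the contribution of the abelian surface's own translations and about the precise form of the Beauville--Bogomolov lattice $H^2(K_n(A),\IZ)\cong H^2(A,\IZ)\oplus\langle -2(n+1)\rangle$, whose transcendental part must be identified with $T(A)$. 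Once this identification is in place, the remaining arguments are formal: the extension result of Nikulin and Shioda's Torelli-type theorem used in the preceding proofs apply verbatim, and the conclusion that $B\cong A$ or $B\cong\widehat{A}$ follows directly from Proposition \ref{abelian-bielliptic}.
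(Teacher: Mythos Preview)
Your proposal is correct and follows exactly the same route as the paper: deduce $\Db(A)\cong\Db(B)$ from the birational equivalence of the generalised Kummer varieties, and then apply Proposition \ref{abelian-bielliptic}. The paper's proof is the single sentence ``This follows at once, since the assumption implies that $\Db(A)\cong \Db(B)$'', so your outline simply supplies the details (Hodge isometry of the Beauville--Bogomolov lattices, identification of the transcendental part with $T(A)$, Orlov's criterion) that the paper leaves implicit.
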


\begin{proof}
This follows at once, since the assumption implies that $\Db(A)\cong \Db(B)$.
\end{proof}

\begin{remark}
Similar to the K3-case we expect the situation to become more complicated when we consider twisted surfaces.
\end{remark}

%\begin{remark}
%We can give a different proof of the proposition if the Picard rank of $A$ is at least $3$. First, let $\rho(A)=4$. By \cite{HLOY2} we know that $\Db(A)\cong \Db(B) \Leftrightarrow {\rm{Kum}}(A)\cong {\rm{Kum}}(B)$ and that in the case $\rho(A)=4$ there is only one abelian surface, namely $A$ itself, whose Kummer surface is isomorphic to ${\rm{Kum}}(A)$.

%Now consider $A=E\times E$ of Picard rank $3$. The Neron--Severi group of such a surface has determinant one (since it is spanned by the pullbacks of the polarisations on the factors and the diagonal: all three have self-intersection zero and the other intersections are all equal to one), thus it is square-free and using \cite{HLOY2} again we conclude that the only Fourier--Mukai partner of $A$ is $A$ itself. 
%\end{remark}
\begin{remark}
An interesting question is whether any FM-partner of an abelian surface admitting a principal polarisation also admits a principal polarisation. This would follow from a stronger statement, namely that a principally polarised abelian surface does not have any non-trivial FM-partners: This is not entirely implausible by the above results for surfaces which are products of elliptic curves and by results in \cite{Orlov} which state that the statement holds in the generic case, that is, when $\End(A)=\IZ$. Namely, for a (not necessarily principally polarised) generic abelian variety $A$ the number of FM-partners of $A$ is equal to $2^{s(A)}$, where $s(A)$ is the number of prime divisors of $\det(NS(A))/2$. 
\end{remark}

\section{Appendix: Exceptional and spherical objects on a bielliptic surface}

Let us now consider the structure of the derived category of a bielliptic surface. First a result which is implicit in \cite{BM2}.

\begin{prop}
Let $\Phi\colon \Db(S)\rightarrow \Db(S)$ be an autoequivalence of the derived category of a bielliptic surface $S$. Denote the kernel of $\Phi$ by $K$. Then $K$ is, up to shift, isomorphic to a sheaf.
\end{prop}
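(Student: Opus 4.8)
The plan is to invoke Orlov's representability theorem to write $\Phi\cong\Phi_K$ for the kernel $K\in\Db(S\times S)$, and then to pin down $K$ by analysing the images of the skyscraper sheaves $k(s)$, $s\in S$. Each $k(s)$ is a \emph{point object}: its full graded endomorphism algebra is the exterior algebra $\bigwedge^\bullet\IC^2$, with $\Hom(k(s),k(s))=\IC$, $\Hom(k(s),k(s)[i])=0$ for $i<0$, and Betti numbers $1,2,1$ in degrees $0,1,2$; moreover $k(s)\lotimes\omega_S\cong k(s)$. Since $\Phi$ is an equivalence it preserves these numerical invariants, and since it commutes with the Serre functor (which on a surface is $[2]\circ(-\lotimes\omega_S)$) the $\omega_S$-invariance passes to the images as well. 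Hence each $P_s:=\Phi(k(s))\cong\mathbf{L}i_s^*K$ is again a point object on $S$ with endomorphism algebra $\bigwedge^\bullet\IC^2$.

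The heart of the argument is the classification of such point objects, which is the content extracted from \cite{BM2}: on a surface of Kodaira dimension $0$ (so with $\omega_S$ numerically trivial) a point object $P$ whose endomorphism algebra has Betti numbers $1,2,1$ must have zero-dimensional support and be a shift of a skyscraper sheaf. Applying this to $S$ gives $P_s\cong k(\sigma(s))[m_s]$ for some point $\sigma(s)\in S$ and integer $m_s$. Because $\Phi$ is an equivalence, $\sigma$ is a bijection whose inverse is read off from $\Phi^{-1}$, in fact an automorphism of $S$; consequently $\supp(K)$ is the graph of $\sigma$, a connected copy of $S$ projecting isomorphically onto the second factor. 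It then remains to control the shifts $m_s$. Here I would use that the top non-vanishing cohomology sheaf of $K$ commutes with the derived restrictions $\mathbf{L}i_s^*$: the locus of $s$ realising the maximal shift is closed and non-empty, and a short upper-semicontinuity argument combined with the connectedness of $S$ forces $m_s$ to equal a single constant $m$.

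With the shift normalised I would conclude by the standard base-change criterion for kernels. After replacing $K$ by $K[m]$ we have $\mathbf{L}i_s^*(K[m])\cong k(\sigma(s))$, a sheaf concentrated in degree $0$, for every closed point $s\in S$; a complex on $S\times S$ all of whose derived fibres over the second factor are sheaves is itself a sheaf, flat over that factor. Hence $K[m]$ is a sheaf and $K$ is, up to the shift by $-m$, isomorphic to a sheaf. The main obstacle is the classification step: showing that the image of a skyscraper has zero-dimensional support and is a genuine shifted skyscraper rather than a more complicated point object. This is where the geometry of the bielliptic surface (equivalently, the structure of its abelian canonical cover) enters and where \cite{BM2} does the essential work; once the images are known to be shifted skyscrapers, the passage from fibrewise data to the global structure of $K$ is routine cohomology and base change.
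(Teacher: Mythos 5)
There is a genuine gap, and it sits exactly where you locate the ``main obstacle'': the classification of point objects that you attribute to \cite{BM2} is false as stated, and the true statement does not give what you need. On a surface with torsion canonical bundle a point object (i.e.\ $P\otimes\omega_S\cong P$, $\Hom(P,P)=\IC$, $\Hom^{<0}(P,P)=0$, with $\hom^i$ equal to $1,2,1$ in degrees $0,1,2$) need \emph{not} be a shifted skyscraper. On an abelian surface every degree-zero line bundle is such an object; on a bielliptic surface $S$, if $i\colon F\hookrightarrow S$ is a smooth fibre of the Albanese fibration (so $N_{F/S}\cong\ko_F$ and $\omega_S|_F\cong\ko_F$ by adjunction) and $L$ is a degree-zero line bundle on $F$, then $i_*L$ is a point object supported on a divisor. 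This is not a pathology you can exclude: bielliptic surfaces are elliptic surfaces, and relative Fourier--Mukai transforms along the elliptic fibration are autoequivalences of $\Db(S)$ that send skyscrapers precisely to such fibre-supported sheaves. Indeed, if your classification held, then by Orlov's criterion every autoequivalence of $\Db(S)$ would be a composition of a shift, an automorphism and a line-bundle twist, in which case the proposition would be a triviality; the dichotomy actually proved in \cite{BM} is that a point object on a surface is either a shifted skyscraper \emph{or} is supported on a connected divisor $D$ with $D^2=0=D\cdot K_S$, and it is the second branch that occurs here. Consequently $\supp(K)$ need not be the graph of an automorphism, and the remaining steps (constancy of the shift, fibrewise base change) have nothing to latch onto --- although those steps themselves are fine and standard once one knows each $\mathbf{L}i_s^*K$ is a sheaf.

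The paper's proof avoids the fibrewise analysis entirely: by \cite[Thm.\ 4.5]{BM2} the equivalence $\Phi$ lifts to an autoequivalence of the canonical cover $\pi\colon\widetilde S\to S$, which is an abelian surface, with kernel $(\pi\times\pi)^*K$; by Orlov's theorem for abelian varieties \cite{Orlov} that lifted kernel is a shifted sheaf; and since $\pi\times\pi$ is \'etale, hence faithfully flat, $(\pi\times\pi)^*K$ being concentrated in one cohomological degree forces the same for $K$. If you want to salvage a fibrewise argument, you would have to prove that the fibre-supported point objects arising as $\Phi(k(s))$ are still (shifted) \emph{sheaves} and that the shift is independent of $s$ --- which is essentially the content of the abelian-surface case you would then be reproving by hand.
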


\begin{proof}
Recall that the statement holds for abelian varieties. By \cite[Thm.\ 4.5]{BM2} we know that there exists a lift of $\Phi$ to an autoequivalence of the canonical cover, which is an abelian surface. The kernel of the lift is given by $\widetilde{K}=(\pi\times \pi)^*(K)$, where $\pi\colon \widetilde{S} \rightarrow S$ is the canonical projection. Since $\pi$ is flat, the pullback does not need to be derived, and since $\widetilde{K}$ is up to shift a sheaf, the same holds for $K$.
\end{proof}

We now turn to the existence of special objects in the derived category. Recall that an object $E$ in the derived category $\Db(X)$ of a smooth projective variety $X$ of dimension $d$ is \emph{spherical} if $E\otimes \omega_X\cong E$ and $\Hom^i(E,E)=\IC$ for $i=0,d$ and $0$ otherwise. Here we use the notation $\Hom^i(E,E)$ for the group $\Hom(E,E[i])$; we will write $\hom^i(E,E)$ for the dimension of this space. Also recall that an object $E$ is called \emph{exceptional} if $\Hom(E,E)=\IC$ and $\Hom^i(E,E)=0$ for all $i\neq 0$. We start with the following easy

\begin{prop}
Let $X$ be a smooth projective variety of dimension $d$ whose canonical bundle is torsion of order 2 and let $\pi\colon \widetilde{X}\rightarrow X$ be the canonical cover. Then an object $E\in \Db(X)$ is exceptional if and only if $\pi^*E \in \Db(\widetilde{X})$ is spherical. 
\end{prop}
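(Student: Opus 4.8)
The plan is to reduce both the exceptional and the spherical conditions to computations of the graded endomorphism space of $\pi^*E$, and to relate the latter to that of $E$ via adjunction, the projection formula and Serre duality. First I would record the relevant properties of $\pi$. Since $\omega_X$ has order $2$, the cover $\pi\colon\widetilde X\rightarrow X$ is \'etale of degree $2$ with $\pi_*\ko_{\widetilde X}\cong\ko_X\oplus\omega_X$, and $\omega_{\widetilde X}\cong\ko_{\widetilde X}$ is trivial. In particular $\pi$ is flat, so $\pi^*$ is exact and needs no deriving. The requirement $\pi^*E\otimes\omega_{\widetilde X}\cong\pi^*E$ in the definition of a spherical object is therefore automatically satisfied, and so sphericity of $\pi^*E$ is equivalent to the numerical condition that $\hom^i(\pi^*E,\pi^*E)$ equals $1$ for $i=0,d$ and $0$ otherwise.

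The key step is to compute the endomorphisms of $\pi^*E$. Using the adjunction $\pi^*\dashv\pi_*$ together with the projection formula and the identification of $\pi_*\ko_{\widetilde X}$ above, one obtains
\[\Hom^i_{\widetilde X}(\pi^*E,\pi^*E)\cong\Hom^i_X(E,\pi_*\pi^*E)\cong\Hom^i_X(E,E)\oplus\Hom^i_X(E,E\otimes\omega_X).\]
Applying Serre duality on $X$ to the second summand, and using that tensoring with the line bundle $\omega_X$ is an autoequivalence, gives $\Hom^i_X(E,E\otimes\omega_X)\cong\Hom^{d-i}_X(E,E)^*$, and hence the identity
\[\Hom^i_{\widetilde X}(\pi^*E,\pi^*E)\cong\Hom^i_X(E,E)\oplus\Hom^{d-i}_X(E,E)^*.\]

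With this identity both implications are pure bookkeeping. Writing $h^i=\hom^i_X(E,E)$, the formula reads $\hom^i(\pi^*E,\pi^*E)=h^i+h^{d-i}$. If $E$ is exceptional then $h^0=1$ and $h^i=0$ for $i\neq0$, so the right-hand side equals $1$ for $i=0,d$ and $0$ otherwise, i.e.\ $\pi^*E$ is spherical (note that $d\geq1$, so the indices $0$ and $d$ are distinct). Conversely, if $\pi^*E$ is spherical then for $i\neq0,d$ the relation $h^i+h^{d-i}=0$ forces $h^i=0$, while at $i=0$ we get $h^0+h^d=1$; since $E\neq0$ we have $h^0\geq1$, whence $h^0=1$ and $h^d=0$, so $E$ is exceptional. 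I expect no genuine obstacle here: the only points requiring care are the correct bookkeeping in the application of Serre duality and the observation that $h^0\geq1$ is what pins down the splitting $h^0+h^d=1$.
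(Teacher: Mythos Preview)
Your argument is correct and follows essentially the same route as the paper: both compute $\Hom^i(\pi^*E,\pi^*E)\cong\Hom^i(E,E)\oplus\Hom^i(E,E\otimes\omega_X)$ via adjunction and the projection formula, and then invoke Serre duality on the second summand. For the converse direction your observation that $h^0\geq 1$ because $E\neq 0$ (the identity is a nonzero endomorphism) is in fact slightly more direct than the paper's version, which rules out the case $h^0=0$, $h^d=1$ by passing through $\Hom^0(E,E\otimes\omega_X)$ and using that $\omega_X$ has order~$2$; both arrive at the same contradiction $\Hom^0(E,E)\neq 0$.
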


\begin{proof}
Let $E$ be an exceptional object. Then
\begin{align*}
\Hom^i(\pi^*E,\pi^*E)&=\Hom^i(E,\pi_*\pi^*E)=\Hom^i(E,E\otimes \pi_*\ko_{\widetilde{X}})=\\
&=\Hom^i(E,E)\oplus \Hom^i(E,E\otimes \omega_X).
\end{align*}
By Serre duality the dimension of the last summand is equal to the dimension of $\Hom^{d-i}(E,E)$ which shows that $\pi^*E$ is spherical.

Conversely, if $\pi^*E$ is spherical, then the above equation shows that either $E$ is exceptional or $\Hom^i(E,E)=0$ for all $i\neq d$ and $\IC$ for $i=d$. But if $\Hom^d(E,E)=\IC$, then by Serre duality $\Hom^0(E,E\otimes \omega_X)=\IC$. On the other hand $\Hom^0(E,E\otimes\omega_X)=\Hom^0(E\otimes\omega_X,E)$ and hence $\Hom^0(E,E)\neq 0$, a contradiction. Therefore, $E$ is an exceptional object.
\end{proof}

\begin{remark}
We can give a different proof of the ``only if'' direction as follows. We have already used that $\pi_*\ko_{\widetilde{X}}\cong \ko_X\oplus \omega_X$ (see \cite[Ch.\ I, Lem.\ 17.2]{BPV}). Furthermore, $R^i\pi_*\ko_{\widetilde{X}}=0$ for $i>0$ by \cite[Ch.\ III, Ex.\ 8.2]{Hartshorne} ($\pi$ is affine since it is finite). Hence there exists a triangle
\[\begin{xy}\xymatrix{\ko_X \ar[r] & R\pi_*\ko_{\widetilde{X}} \ar[r] & \omega_X }\end{xy}\]
in $\Db(X)$. This implies that $\pi$ is a simple morphism.

By \cite[Prop.\ 3.13]{ST} the pullback of an exceptional object $E \in \Db(X)$ is a spherical object in $\Db(\widetilde{X})$.
\end{remark}

\begin{cor}
Let $S$ be a bielliptic surface such that its canonical cover is of degree 2. Then there are no exceptional objects in $\Db(S)$.\qqed
\end{cor}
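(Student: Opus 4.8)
The plan is to pull the problem up to the canonical cover and settle it there. Since the canonical bundle of $S$ has order $2$, the cover $\pi\colon\widetilde S\to S$ is an \'etale double cover whose total space $\widetilde S$ is an abelian surface (this is exactly the content of cases (1) with $n=2$, (3) and (4) of the classification recalled in Section 4). The hypotheses of the preceding proposition are therefore met with $X=S$ and $\widetilde X=\widetilde S$, so an object $E\in\Db(S)$ is exceptional if and only if $\pi^*E$ is a spherical object of $\Db(\widetilde S)$. Consequently the corollary is equivalent to the single assertion that an abelian surface admits no spherical object, and the bielliptic statement will follow by contraposition. I would isolate this last assertion as a separate lemma.

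Before attempting it I would dispose of the naive hope that it is a purely numerical matter. If $E$ were spherical on $\widetilde S$, then $\chi(E,E)=\hom^0(E,E)-\hom^1(E,E)+\hom^2(E,E)=2$, and since $\td_{\widetilde S}=1$ the Mukai pairing gives $\langle\ch(E),\ch(E)\rangle=-2$. The algebraic Mukai lattice of an abelian surface is $U\oplus\NS(\widetilde S)$, with the hyperbolic summand coming from $H^0\oplus H^4$; this is an even lattice that visibly represents $-2$. Hence the Mukai vector of a would-be spherical object is not obstructed, and a genuinely homological input is required. This is the main obstacle.

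The homological input I would aim for is the vanishing-free statement that $\Ext^1(E,E)\neq 0$ for every nonzero $E\in\Db(\widetilde S)$, which already contradicts the sphericity condition $\hom^1(E,E)=0$. The mechanism is that the abelian surface deforms any object in two independent geometric ways: translating along $\widetilde S$ and tensoring by $\Pic^0(\widetilde S)$. These yield Kodaira--Spencer homomorphisms $\kappa\colon T_0\widetilde S\to\Ext^1(E,E)$ and $\widehat\kappa\colon T_0\Pic^0(\widetilde S)\to\Ext^1(E,E)$, the latter being simply $v\mapsto v\otimes\id_E$. It would be a mistake to expect either map to be injective on its own: for a skyscraper $\ko_x$ the map $\kappa$ is an isomorphism while $\widehat\kappa=0$ (because $\ko_x\otimes L\cong\ko_x$ for every line bundle $L$), and for $\ko_{\widetilde S}$ the roles are reversed. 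The real point, and the crux of the whole argument, is therefore that $\kappa$ and $\widehat\kappa$ cannot vanish simultaneously for $E\neq 0$; equivalently, no nonzero object is infinitesimally invariant under the full $\widetilde S\times\Pic^0(\widetilde S)$ action. I would try to establish this using Mukai's Fourier transform on $\widetilde S$, which interchanges the translation and the twist families and so ties the two maps together.

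Once the nonvanishing $\Ext^1(E,E)\neq 0$ is secured the corollary follows at once: an abelian surface has no spherical object, hence by the preceding proposition $\Db(S)$ has no exceptional object. Alternatively, if one prefers not to reprove the lemma, one may simply invoke the known fact that abelian surfaces carry no spherical objects and then quote the proposition directly.
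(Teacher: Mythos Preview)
Your approach is exactly the paper's: the corollary is immediate from the preceding proposition together with the fact that an abelian surface admits no spherical objects, the latter being cited from Bridgeland (Lemma~15.1 in \cite{Bri}) rather than reproved. Your additional sketch of why $\Ext^1(E,E)\neq 0$ on an abelian surface, via the combined translation/twist deformations and the Fourier--Mukai transform, is along the right lines but goes beyond what the paper does for this corollary.
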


In fact, a more general statement holds.

\begin{prop}
If $S$ is a bielliptic surface, then no object $E$ in $\Db(S)$ is rigid, that is, $\Hom^1(E,E)\neq 0$ for all $E$. In particular, there are no spherical and exceptional objects.
\end{prop}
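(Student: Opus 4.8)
The plan is to reduce the ``in particular'' clause to the rigidity statement and then prove the latter. If $\ke$ is spherical on the surface $S$ then $\Hom^1(\ke,\ke)=0$ by definition, and if $\ke$ is exceptional then $\Hom^i(\ke,\ke)=0$ for all $i\neq 0$, in particular $\Hom^1(\ke,\ke)=0$; so both are rigid and it suffices to prove $\Hom^1(\ke,\ke)\neq 0$ for every nonzero $\ke\in\Db(S)$. Since a direct summand of a rigid object is rigid, I may and do assume $\ke$ is indecomposable, so $\hom^0(\ke,\ke)\geq 1$. The clean case is $\rk(\ke)\neq 0$: working in characteristic $0$, the trace map splits the unit $\ko_S\to\RHom(\ke,\ke)$ (as $\tfrac1{\rk(\ke)}\tr$), exhibiting $\ko_S$ as a direct summand of $\RHom(\ke,\ke)$ in $\Db(S)$. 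Applying hypercohomology, $H^1(S,\ko_S)$ becomes a direct summand of $\Ext^1(\ke,\ke)$, and since $h^1(\ko_S)=1\neq 0$ for a bielliptic surface this already gives $\hom^1(\ke,\ke)\neq 0$. No covering is needed here, and the argument is valid for arbitrary objects of nonzero rank.

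The remaining, and genuinely harder, case is $\rk(\ke)=0$. Here I would pass to the canonical cover $\pi\colon A\to S$, which is an abelian surface carrying a free action of $G=\IZ/n\IZ$. Exactly as in the computation of the earlier proposition, one has $\Ext^1_A(\pi^*\ke,\pi^*\ke)=\bigoplus_{i=0}^{n-1}\Ext^1_S(\ke,\ke\otimes\omega_S^i)$, and the summand $i=0$, namely $\Ext^1_S(\ke,\ke)$, is precisely the $G$-invariant part. Thus it suffices to produce a nonzero $G$-invariant class in $\Ext^1_A(\kf,\kf)$, where $\kf:=\pi^*\ke$ has $\rk(\kf)=0$. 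If $c_1(\kf)\neq 0$, then after twisting by the pullback of an ample class from $S$ (which changes neither the $\Ext$-groups nor the $G$-action) I can arrange $\int_A\ch_2(\kf)\neq 0$, so that the Fourier--Mukai transform $\Phi_{\kp}\colon\Db(A)\to\Db(\widehat A)$ sends $\kf$ to an object of nonzero rank; applying the trace-splitting argument on $\widehat A$ then yields a nonzero class, to which one transports the problem back through the equivalence. The numerically trivial case ($c_1(\kf)=0$ as well) is handled on $A$ itself, where $\omega_A\cong\ko_A$ makes Serre duality self-dual ($\hom^i_A=\hom^{2-i}_A$) and $\chi_A(\kf,\kf)=0$ forces higher self-extensions.

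The main obstacle is the rank-zero case, and specifically the \emph{equivariance bookkeeping}: I must check that $\Phi_{\kp}$ intertwines the deck action on $A$ with an induced deck action on $\widehat A$, that $H^1(\ko_{\widehat A})^G\neq 0$, and that the trace splitting on $\widehat A$ can be performed with a $G$-invariant class, so that the resulting nonzero extension class really is $G$-invariant and hence descends to $\Ext^1_S(\ke,\ke)$. A cleaner way to organise this, which I would pursue in parallel, is to invoke the fact that an abelian surface admits no nonzero rigid object and then isolate the one invariant deformation direction that survives to $S$: the nowhere-vanishing vector field $v\in H^0(T_S)$ generating the free action of the Albanese elliptic curve is the pullback of a $G$-invariant translation field on $A$, and its Kodaira--Spencer class $v\lrcorner\operatorname{At}(\kf)\in\Ext^1_A(\kf,\kf)^G$ is the natural candidate for the desired nonzero invariant class precisely in the rank-zero situation dual to the trace argument.
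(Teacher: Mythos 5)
Your argument for objects of nonzero rank is correct and in fact slightly more general than what the paper does: the trace splitting of the unit $\ko_S\to\RHom(\ke,\ke)$ handles \emph{any} complex of nonzero rank in one stroke, whereas the paper only treats locally free sheaves this way (via $\ko_S\subset E\otimes E^\vee$) and then needs \cite[Lem.\ 12.4]{Bri} and the Kuleshov--Orlov results to reduce the general case to sheaves. However, the rank-zero case of your proposal is a genuine gap, and you say so yourself. Producing a nonzero class in $\Ext^1_A(\kf,\kf)$ is easy (Bridgeland's \cite[Lem.\ 15.1]{Bri} already gives it), but what you need is a nonzero class in the $G$-invariant summand $\Ext^1_S(\ke,\ke)\subset\bigoplus_i\Ext^1_S(\ke,\ke\otimes\omega_S^i)$, and neither of your two candidate mechanisms is shown to deliver one. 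The Euler-characteristic argument ($\chi_A(\kf,\kf)=0$) says nothing about which summand the resulting extension lives in. The Fourier--Mukai route requires exactly the equivariance you flag as unresolved. And the Atiyah-class candidate $v\lrcorner\operatorname{At}(\kf)$ can simply vanish: for $\ke=\ko_D$ with $D$ a fibre of the projection $S\to F/G\cong\IP^1$ (an orbit closure of the translation flow), $\ke$ is invariant under the flow of $v$, so its Kodaira--Spencer class in that direction is zero even though $\ko_D$ is of course not rigid.

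The paper circumvents this difficulty by running the adjunction in the other direction. After reducing to sheaves and splitting off the torsion part (Kuleshov--Orlov show a torsion-free rigid sheaf on a surface is locally free, so of nonzero rank, which your trace argument already kills), one is left with a rigid \emph{torsion} sheaf $T$; such a sheaf satisfies $T\otimes\omega_S\cong T$ and hence descends as $T=\pi_*F$ by \cite[Prop.\ 2.5]{BM2}. Then
\[\Hom^1_S(\pi_*F,\pi_*F)=\Hom^1_A(\pi^*\pi_*F,F)=\bigoplus_{k=1}^{n}\Hom^1_A((h^k)^*F,F)\supseteq \Hom^1_A(F,F)\neq 0,\]
so $\Ext^1_A(F,F)$ sits as a direct summand of $\Ext^1_S(T,T)$ rather than $\Ext^1_S(T,T)$ sitting as the invariant part of something on $A$. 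This is the idea your proposal is missing: use $\pi_*$ (for rank-zero, i.e.\ torsion, objects) instead of $\pi^*$, so that no invariance argument is needed at all. If you want to salvage your outline, the reduction to sheaves via \cite[Lem.\ 12.4]{Bri} and the descent of rigid torsion sheaves are the two ingredients to add.
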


\begin{proof}
The main input is \cite[Lem.\ 15.1]{Bri}, which shows that there are no objects $F$ in the derived category of an abelian surface satisfying $\Hom^1(F,F)=0$. Recall that the canonical cover of $S$ is an abelian surface $A$ and $S$ is equal to the quotient of $A$ by a cyclic group $H$ whose generator will be denoted by $h$ and whose order will be denoted by $n$. The morphism $A\rightarrow S$ will be denoted by $\pi$.

Using \cite[Lem.\ 12.4]{Bri}, it is enough to show that there are no rigid sheaves. First, consider a torsion sheaf $T$. Then, since $T\otimes \omega_S\cong T$, $T=\pi_*F$ for some object $F \in  \Db(A)$ by \cite[Prop.\ 2.5]{BM2}. We have
\[\Hom^i(T,T)=\Hom^i(\pi_*F, \pi_*F)=\Hom^i(\pi^*\pi_*F,F)=\bigoplus_{k=1}^{n}\Hom^i((h^k)^*F,F).\]
The above shows that $F$ is rigid, which is impossible. Hence, no rigid torsion sheaves exist. Furthermore, no locally free sheaf $E$ can be rigid, because $\ko_S$ is a direct summand of $E\otimes E^\vee$ and hence $\IC=H^1(S,\ko_S)\subset \Ext^1(E,E)=\Hom^1(E,E)$. 

Now, let $E'$ be any rigid sheaf. By \cite[Lem.\ 2.2]{KO} its torsion subsheaf and the torsion-free quotient are rigid sheaves and by \cite[Cor.\ 2.3]{KO} a torsion-free rigid sheaf on a smooth projective surface is locally free. But we have seen that neither torsion nor locally free rigid sheaves can exist.   
\end{proof}

\begin{remark}
One can prove that there are no spherical objects using only the fact that there are no spherical objects in the derived category of an abelian surface. Let us demonstrate the technique in the case $n=2$. Thus,
\[\Hom^0(F,F)\oplus \Hom^0(h^*F,F)=\IC=\Hom^2(h^*F,F)\oplus \Hom^2(F,F).\]
If $\hom^0(F,F)=1$, then also $\hom^2(F,F)=1$ by Serre duality, hence $F$ is spherical which is absurd. If $\Hom^0(h^*F,F)=\IC$, then also $\Hom^0(F,h^*F)=\IC$, but $\Hom(h^*F,h^*F)=0$, a contradiction. For $n=3$ one can check that $h^*F\oplus F$ would have to be a spherical object and similar reasoning proves the other cases as well.
%Now let $n>2$. Consider the $0$th part. As before we know that $\Hom^0(F,F)$ has to be zero. Assume, for example, that $\Hom^0(h^*F,F)=\IC$. Then 
%$\Hom^0(F,h^*F)=0$ and similarly for $\Hom^2$. This shows that $h^*F\oplus F$ is a spherical object, again a contradiction. Similar reasoning proves the other cases.
\end{remark}
%\begin{remark}
%The technique above does not work if the order of the canonical bundle is at least 3. For $d=2$ we can say the following. It is known (\cite[Lem.\ 15.1]{Bri}) that no object $F$ in the derived category of an abelian surface is rigid. Therefore, there can be no exceptional object $E$ on a bielliptic surface $S$ with the additional property that $E\otimes\omega_S=E$. So, for example, a torsion sheaf on $S$ is never an exceptional object.

%A stable bundle is also never an exceptional object. The case $E\otimes \omega_S=E$ being dealt with, we have to consider stable bundles with $E\otimes \omega_S\neq E$. Note that $E\otimes\omega_S$ or, more generally, $E\otimes L$ for $L\in \Pic^0(S)$ is stable with the same numerical invariants. Hence, the moduli space of bundles with these numerical invariants has at least dimension 1 in the point $[E]$ and therefore $\dim\Hom^1(E,E)=\dim\Ext^1(E,E)\geq 1$.
%\end{remark}

\begin{remark}
Note that the situation is completely different for Enriques surfaces. Indeed, any line bundle $\kl$ on an Enriques surface $X$ is an exceptional object since $\Hom^i(\kl,\kl)=\Ext^i(\kl,\kl)\cong \Ext^i(\ko_X,\ko_X)\cong H^i(X,\ko_X)$. Also note that there are no spherical objects on a generic Enriques surface (see \cite[Prop.\ 3.17]{MMS}), but they do exist on non-generic ones.
\end{remark}

%\begin{remark}
%See \cite[pp.\ 2294-2295, 3000]{BS} for the following. Let $E_1$ be the elliptic curve $\IC/\IZ+i\IZ$ and let $A=E_1\times E_1$. Then the Neron--Severi group of $A$ is generated by the fibres of the two projections, the diagonal $\Delta$ and $\Sigma$, the graph of the automorphism $x\mapsto ix$. All these elements have self-intersection zero, $\Delta \Sigma=2$ and
%\[F_1 F_2=F_1 \Delta= F_2\Delta=F_1\Sigma=F_2\Sigma=1.\]
%If we set $E_2=\IC/\IZ+\omega\IZ$, where $\omega^6=1$, then the Neron--Severi group of $E_2\times E_2$ is again of rank 4, here we take the graph of $\sigma\colon x\mapsto \omega x$ as the fourth generator and the intersection numbers are the same except that $\Sigma\Delta=1$. 
%\end{remark}

\end{document}